\newtheorem{theorem}{Theorem}[section]
\newtheorem{lemma}[theorem]{Lemma}
\newtheorem{prop}[theorem]{Proposition}
\newtheorem{cor}[theorem]{Corollary}
\theoremstyle{definition}
\newtheorem{definition}[theorem]{Definition}
 \newcommand{\RCA}{\ensuremath{\mathsf{RCA_0}}}
 \newcommand{\ATR}{\ensuremath{\mathsf{ATR_0}}}
\DeclareMathOperator{\HYPP}{\mathit{HYP}}
\newcommand{\HYP}{\ensuremath{\HYPP}}
\newcommand{\CDPB}{\mathsf{CD}\text{-}\mathsf{PB}}
\newcommand{\concat}{^\frown}
\newcommand{\dom}{\operatorname{dom}}
\begin{document}

\title{Borel Combinatorics Fail in \HYP}
\author{Henry Towsner}
\address {Department of Mathematics, University of Pennsylvania, 209 South 33rd Street, Philadelphia, PA 19104-6395, USA}
\email{htowsner@math.upenn.edu}
\urladdr{\url{http://www.math.upenn.edu/~htowsner}}

\author{Rose Weisshaar}
\address {Department of Mathematics, University of Pennsylvania, 209 South 33rd Street, Philadelphia, PA 19104-6395, USA}
\email{roseweis@math.upenn.edu}
\urladdr{\url{http://www.math.upenn.edu/~roseweis}}

\author{Linda Westrick}
\address {Department of Mathematics, Penn State University, University Park, PA 16802, USA}
\email{westrick@psu.edu}
\urladdr{\url{http://www.personal.psu.edu/lzw299/}}

\thanks{Towsner was partially supported by NSF grant DMS-1600263.  
Westrick was partially supported by NSF grant DMS-1854107 and by the Cada R. and Susan Wynn Grove Early Career Professorship in Mathematics.}

\date{\today}

\begin{abstract}  We characterize the completely determined Borel subsets of \HYP{} as 
exactly the $\Delta_1(L_{\omega_1^{ck}})$ subsets of \HYP{}.  As a result, \HYP{}
believes there is a Borel well-ordering of the reals, that
the Borel Dual Ramsey Theorem fails, and that
every Borel $d$-regular bipartite graph has a Borel perfect matching, among other 
examples.  Therefore,
the Borel Dual Ramsey Theorem and several theorems of descriptive 
combinatorics are not theories of hyperarithmetic analysis.  In the case of the 
Borel Dual Ramsey Theorem, this
answers a question of Astor, Dzhafarov, Montalb\'an, Solomon \& the third author.
\end{abstract}

\maketitle

\section{Introduction}
Theorems about Borel sets are often proved using arguments which appeal to some property of Borel sets, rather than proceeding by transfinite recursion on the 
structure of the set directly.  Examples include category arguments, measure arguments, and Borel determinacy arguments.  When a theorem has been proved using one of these methods, it is natural to wonder if there are essentially different proofs.  Reverse Mathematics provides a framework for answering this kind of curiosity.  In this paper we consider the Reverse Mathematics strength of 
several such theorems, one from Ramsey theory and the rest from descriptive combinatorics. 

The Reverse Math strength of the Dual Ramsey Theorem \cite{CarlsonSimpson1984}
 has been the topic of several 
papers \cite{Simpson1986,MillerSolomon2004,DFSW,ADMSW}.  
In this theorem, one starts with a ``nice'' 
coloring of the space of partitions of $\omega$ into $k$
pieces, and the theorem guarantees a partition of $\omega$ into infinitely many pieces,
all of whose $k$-piece coarsenings have the same color.  
When ``nice'' means Borel, in \cite{ADMSW} it 
was shown that the Borel Dual Ramsey Theorem for 3-partitions 
follows from $\CDPB+\mathsf{ACA}_0^+$, where $\CDPB$ is the statement
``every completely determined Borel set has the property of Baire''.\footnote{In fact,
since $\CDPB$ implies $\mathsf L_{\omega_1,\omega}\text - \mathsf{CA}_0$,
 the Borel Dual Ramsey Theorem for 3-partitions follows from $\CDPB + \mathsf{\Sigma^1_1\text-IND}$.}  ``Completely determined'' refers to a restricted 
 way in which Borel sets can be encoded; see Section \ref{sec:preliminaries} for details.
 This reflects the fact that the proof of the Borel Dual Ramsey Theorem 
uses a category argument.  In fact, the theorem is also true for 
colorings which have the property of Baire \cite{ProemelVoigt1985}.

In \cite{ADMSW}, 
it was left as an open question whether the Borel Dual Ramsey Theorem 
is a statement of hyperarithmetic analysis.  If it were, 
it would imply that the category argument in the usual proof is 
not essential, because $\CDPB$ fails in \HYP{} \cite{ADMSW}, while
by definition every statement of hyperarithmetic analysis holds in \HYP.

\begin{theorem} For any finite $k,\ell \geq 2$,
The Borel Dual Ramsey Theorem for $k$-partitions and $\ell$ colors fails in \HYP.
Therefore, the Borel Dual Ramsey Theorem is not a statement of
hyperarithmetic analysis.
\end{theorem}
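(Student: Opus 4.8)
The second sentence is immediate from the first: by definition every statement of hyperarithmetic analysis holds in \HYP, so a statement that fails in \HYP{} is not one. The plan, then, is to show that the Borel Dual Ramsey Theorem for $k$-partitions and $\ell$ colors fails in \HYP, and for this we invoke the characterization of the completely determined Borel subsets of \HYP{} as the $\Delta_1(L_{\omega_1^{ck}})$ subsets of \HYP{} (the main theorem of this paper). It suffices to produce a single $\Delta_1(L_{\omega_1^{ck}})$ coloring $c\colon (\omega)^k \to \ell$ of the space $(\omega)^k$ of $k$-partitions of $\omega$ with the property that, for every infinite partition $X$ lying in $L_{\omega_1^{ck}}$, the coloring $c$ is nonconstant on the set $(X)^k$ of $k$-piece coarsenings of $X$. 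Indeed, the restriction of $c$ to the reals of \HYP{} is then a $\Delta_1(L_{\omega_1^{ck}})$ subset of \HYP, so by the characterization it equals the evaluation in \HYP{} of some completely determined Borel code $\tilde c \in \HYP$. Since \HYP{} thereby agrees with $V$ about membership in $c$ for all of its own reals, and the infinite partitions of \HYP{} are precisely those in $L_{\omega_1^{ck}}$ (each of which, by the construction below, comes equipped with a hyperarithmetic pair of points in $(X)^k$ receiving different $c$-colors), \HYP{} satisfies ``$\tilde c$ is a completely determined Borel $\ell$-coloring of $(\omega)^k$ with no monochromatic infinite partition'', which is exactly the failure of the theorem. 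Only two of the $\ell$ colors will be used, so the hypothesis $\ell \geq 2$ is no harder than $\ell = 2$; the hypothesis $k \geq 2$ enters only through the fact that each $(X)^k$ is then a perfect set.

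To build $c$ I would work inside $L_{\omega_1^{ck}}$. Fix the canonical well-ordering of its reals and let $\langle X_\alpha\rangle_{\alpha < \omega_1^{ck}}$ list, in that order, those reals coding infinite partitions. Recurse on $\alpha$: at stage $\alpha$, having committed $c$ at a set $R_\alpha$ of reals during earlier stages, pick two distinct reals $Y_\alpha, Y_\alpha' \in (X_\alpha)^k \setminus R_\alpha$ and commit $c(Y_\alpha) = 0$ and $c(Y_\alpha') = 1$; at the end, set $c = 0$ at every real not committed at any stage. Such $Y_\alpha, Y_\alpha'$ exist because, $k$ being at least $2$, $(X_\alpha)^k$ is in definable bijection with $(\omega)^k$ and so, from the point of view of $L_{\omega_1^{ck}}$, is a perfect, proper-class-sized collection of reals, with members at arbitrarily high levels of the $L$-hierarchy, whereas $R_\alpha$ lies below a single level. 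By construction each $(X_\alpha)^k$ meets both $c^{-1}(0)$ and $c^{-1}(1)$, so $c$ is nonconstant on it; and since every hyperarithmetic infinite partition is some $X_\alpha$, this is the property required above.

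The step that needs care, and what I expect to be the main obstacle, is showing that the coloring just described is $\Delta_1(L_{\omega_1^{ck}})$, not merely $\Sigma_1$: a recursion whose length is the ordinal height of $L_{\omega_1^{ck}}$ produces, a priori, only a $\Sigma_1$ object, and a $\Sigma_1$-but-not-Borel coloring would have no completely determined Borel code, so the argument would collapse. The remedy is to make the recursion local: at each stage, choose $Y_\alpha$ and $Y_\alpha'$ so that they appear strictly later in the well-ordering than $\alpha$ itself and than every real committed at stages $\leq \alpha$, which the previous paragraph permits. Then whether a given real $Y$ ever gets committed, and with which color, is decided by the portion of the recursion already visible at the level of $L_{\omega_1^{ck}}$ at which $Y$ first appears, together with a bounded amount more; this makes both ``$c(Y) = 0$'' and ``$c(Y) = 1$'' $\Sigma_1$ over $L_{\omega_1^{ck}}$, hence $c$ is $\Delta_1$. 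Carrying out this bookkeeping via the admissibility of $L_{\omega_1^{ck}}$, together with the routine verifications that $(X)^k$ is perfect for $k \geq 2$ and that ``$Y$ is a $k$-coarsening of $X$'' is arithmetic, completes the proof of the first assertion; the second then follows as above.
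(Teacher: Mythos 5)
Your proposal is correct and follows essentially the same route as the paper: both invoke the characterization of the completely determined Borel subsets of \HYP{} as the $\Delta_1(L_{\omega_1^{ck}})$ sets and then diagonalize by a transfinite choice argument that places two $k$-coarsenings of each hyperarithmetic infinite partition into different colors, with $\Delta_1$-ness secured by arranging that each real's color is settled at a bounded stage. The only difference is bookkeeping: the paper indexes stages by Turing jumps $\emptyset^\beta$ and uses a dominating function to keep the chosen coarsenings fresh and pairwise distinct, whereas you recurse along the $<_L$-ordering of $L_{\omega_1^{ck}}$ and choose witnesses high in that ordering --- the content is the same.
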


It remains open whether the Borel Dual Ramsey Theorem implies $\CDPB$.

Our second motivation comes from the area of descriptive combinatorics. 
Using the axiom of choice, any $d$-regular bipartite graph has a perfect matching,
and any acyclic graph has a 2-coloring. 
However, 
if we restrict attention to Borel perfect matchings and Borel colorings, 
the matching may no longer exist or the needed number of colors may increase.
This area is surveyed in \cite{KechrisMarks}.

Marks has shown that for all $d\geq 2$, there is a $d$-regular acyclic Borel graph with no $d$-coloring, and a $d$-regular acyclic Borel bipartite graph with no Borel perfect matching \cite{Marks16}.  
The proofs use a Borel determinacy argument, in contrast to the more typical use 
of measure and category arguments to prove theorems in this area.
In a talk given at the ASL Annual 
Meeting in Macomb in 2018, Marks wondered whether such a big 
hammer was really needed, and asked for the Reverse Mathematics strength
of the perfect matching theorem.  Kun recently gave a partial answer by 
providing a measure-theoretic proof of the perfect matching theorem \cite{Kun2021}.  We show that no statement of hyperarithmetic 
analysis is strong enough for either theorem.
\begin{theorem}
In \HYP, every completely determined Borel $d$-regular graph with no odd cycles 
has a completely 
determined Borel perfect matching and a completely determined Borel 2-coloring.
\end{theorem}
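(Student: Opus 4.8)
The plan is to invoke the characterization of the completely determined Borel subsets of \HYP{} as the $\Delta_1(L_{\omega_1^{ck}})$ subsets of \HYP{}. Working inside $L_{\omega_1^{ck}}$, it then suffices to start from a $d$-regular graph $G$ with no odd cycles --- equivalently, bipartite --- presented as a $\Delta_1(L_{\omega_1^{ck}})$ relation on a $\Delta_1(L_{\omega_1^{ck}})$ set $V$ of \HYP{} reals, and to produce $\Delta_1(L_{\omega_1^{ck}})$ sets coding a perfect matching $M$ and a proper $2$-coloring $c$ of $G$. Translating $M$ and $c$ back into completely determined Borel codes, and verifying inside \HYP{} that the codes do what is claimed, is then routine bookkeeping with the characterization: the relevant properties are arithmetic in the (completely determined, hence total) membership relations of the codes involved, and they are true.

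The first point is that $d$-regularity makes the connected components of $G$ visible as honest sets of $L_{\omega_1^{ck}}$. The neighbor map $v \mapsto N_G(v)$ is $\Sigma_1$-definable, total, and finite-valued --- each $N_G(v)$ has exactly $d$ elements, all of which are \HYP{} reals, so $N_G(v)$ is a set of $L_{\omega_1^{ck}}$ --- and hence it is $\Delta_1$. By $\Sigma_1$-recursion on $\omega$ the ball map $(v,k) \mapsto B_k(v)$, the set of vertices within distance $k$ of $v$, is total $\Delta_1$ and finite-valued; so by $\Sigma_1$-collection and the union axiom the component map $v \mapsto C_v := \bigcup_{k \in \omega} B_k(v)$ is a total $\Delta_1$ function whose values are countable sets of $L_{\omega_1^{ck}}$, and in particular connectedness and the graph metric on $V$ are $\Delta_1$. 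This step genuinely uses $d$-regularity: for an arbitrary $\Delta_1(L_{\omega_1^{ck}})$ graph a component is only $\Sigma_1$-definable, and can in fact be $\Pi^1_1$-complete.

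Now work one component at a time. Each $C_v$ is a countable $d$-regular graph with no odd cycles that is an element of $L_{\omega_1^{ck}}$. Since $\mathsf{ACA}_0$ holds of the \HYP{} reals, and $\mathsf{ACA}_0$ proves K\"onig's edge-coloring theorem for countable bipartite graphs (by a compactness argument --- K\"onig's lemma applied to finite partial proper $d$-edge-colorings) and hence, by taking one color class and using $d$-regularity, proves that every $d$-regular bipartite graph has a perfect matching, the graph $C_v$ has a perfect matching inside $L_{\omega_1^{ck}}$; it trivially has a proper $2$-coloring, namely its bipartition. Let $<_L$ be the canonical well-ordering of $L_{\omega_1^{ck}}$, which is $\Delta_1$, and for a component $C$ let $M_C$ be the $<_L$-least perfect matching of $C$ and $c_C$ its $<_L$-least proper $2$-coloring. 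Bounding the quantifier implicit in ``$<_L$-least'' by the ($\Delta_1$-obtainable) set of $<_L$-predecessors, one checks that $C \mapsto M_C$ and $C \mapsto c_C$ are total $\Delta_1$ functions.

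Finally put $M = \{\,\{u,v\} : u\,E_G\,v \text{ and } \{u,v\} \in M_{C_u}\,\}$ and $c(v) = c_{C_v}(v)$. These are obtained by composing total $\Delta_1$ functions and relations, so $M$ and $c$ are $\Delta_1(L_{\omega_1^{ck}})$; and since the restriction of a perfect matching, respectively proper $2$-coloring, of $G$ to a component is one for that component, and such component-wise choices glue, $M$ is a perfect matching and $c$ a proper $2$-coloring of $G$. Pulling $M$ and $c$ back through the characterization gives the completely determined Borel perfect matching and $2$-coloring asserted by the theorem. I expect the crux to be the two definability points above: using $d$-regularity to recover the components as genuine $\Delta_1$-objects, so that the component-wise construction is legitimate inside $L_{\omega_1^{ck}}$; and checking that replacing ``a perfect matching of $C$'' by ``the $<_L$-least one'' keeps the glued object $\Delta_1$ rather than merely $\Sigma_1$.
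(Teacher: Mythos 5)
Your proposal is correct and follows essentially the same route as the paper: reduce via the characterization theorem to producing $\Delta_1(L_{\omega_1^{ck}})$ objects, use $d$-regularity together with admissibility ($\Sigma_1$-collection, playing the role of the paper's $\Sigma^1_1$-bounding) to realize each connected component as a genuine set, and then pick a canonical least perfect matching and $2$-coloring on each component via compactness. The only cosmetic difference is that you get the matching of a countable component from K\"onig's edge-coloring theorem, whereas the paper embeds finite subgraphs into finite $d$-regular bipartite graphs (its lemma on induced matchings) before applying compactness to a $\Pi^0_1$ class of matchings.
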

Statements of hyperarithmetic analysis are among the weakest axioms 
strong enough to make sense of Borel sets.  It would be interesting to know 
whether Marks' $d$-coloring theorem can be proved via a measure or category 
argument, two methods which suffice for many theorems of descriptive 
combinatorics.  We do not take on that question here, but 
for a brief discussion of how it can be formalized,
 see the end of Section \ref{subsection:graphs}.

Both results above are consequences of the main theorem of this paper,
characterizing those 
subsets of \HYP{} which \HYP{} believes are completely determined Borel.
Recall that $L_{\omega_1^{ck}} \cap 2^\omega =\HYP$.

\begin{theorem}\label{thm:main}
  For any $A \subseteq \HYP$, the following are equivalent.
  \begin{enumerate}
  \item  There is a completely determined Borel
code for $A$ in \HYP.
\item There is a determined Borel code for $A$ in \HYP.
\item  $A$ is $\Delta_1(L_{\omega_1^{ck}})$. 
\end{enumerate}
\end{theorem}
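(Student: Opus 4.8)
The plan is to establish $(1)\Rightarrow(2)\Rightarrow(3)\Rightarrow(1)$, with essentially all of the work in the last implication. Here $(1)\Rightarrow(2)$ is immediate, a completely determined Borel code being in particular a determined one. For $(2)\Rightarrow(3)$, let $c\in\HYP$ be a determined Borel code for $A$. For $X\in\HYP$, the assertion that a real $f$ is an evaluation map for $c$ at $X$ with root value $1$ (resp.\ $0$) is arithmetic in $f\oplus X\oplus c$, hence $\Delta_0$ over $L_{\omega_1^{ck}}$ with parameters $\omega,X,c$, all of which lie in $L_{\omega_1^{ck}}$ and are themselves $\Delta_1(L_{\omega_1^{ck}})$-definable. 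Since $\HYP=2^\omega\cap L_{\omega_1^{ck}}$, the assertion ``there is such an $f$ in $\HYP$'' is $\Sigma_1(L_{\omega_1^{ck}})$. As $c$ is determined, every $X\in\HYP$ has an evaluation map in $\HYP$ and all such maps agree at the root, so $X\in A$ iff some $\HYP$-evaluation map has root value $1$ and $X\notin A$ iff some $\HYP$-evaluation map has root value $0$; both conditions are $\Sigma_1(L_{\omega_1^{ck}})$, so $A$ is $\Delta_1(L_{\omega_1^{ck}})$.

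The substance is $(3)\Rightarrow(1)$. Fix $\Delta_0$ formulas $\phi_0,\psi_0$ with $X\in A\iff L_{\omega_1^{ck}}\models\exists u\,\phi_0(X,u)$ and $X\notin A\iff L_{\omega_1^{ck}}\models\exists v\,\psi_0(X,v)$ for $X\in\HYP$. Since every hyperarithmetic $X$ has $\omega_1^{ck,X}=\omega_1^{ck}$, the statement $L_{\omega_1^{ck}}\models\exists u\,\phi_0(X,u)$ is equivalent to $L_\alpha(X)\models\exists u\,\phi_0(X,u)$ for some $\alpha<\omega_1^{ck}$, where $L_\alpha(X)$ denotes the genuine $\alpha$th level of the relativized constructible hierarchy (and similarly for $\psi_0$). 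The idea is to search through these levels along a Harrison order. Fix a computable linear order $H$ that is ill-founded but whose well-founded initial segment $\mathrm{wf}(H)$ has order type exactly $\omega_1^{ck}$, and by effective transfinite recursion along $H$ build a sequence of $\in$-structures $\langle M_h[X]:h\in H\rangle$, uniformly computable in $X$, with $M_h[X]\cong L_{|h|_H}(X)$ for $h\in\mathrm{wf}(H)$. Writing $\Phi(X,h)$ and $\Psi(X,h)$ for ``$M_h[X]\models\exists u\,\phi_0$'' and ``$M_h[X]\models\exists v\,\psi_0$'', each $\Sigma^0_1$ in $X$ uniformly in $h$ since $\phi_0,\psi_0$ are bounded, let $c$ be a computable, finite-rank Borel code for
\[
  B_c=\{\,X:\exists h\in H\;\bigl(\Phi(X,h)\wedge\forall h'<_H h\,(\neg\Phi(X,h')\wedge\neg\Psi(X,h'))\bigr)\,\}.
\]

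Because $c$ is a computable well-founded code of finite rank, for each $X\in\HYP$ the unique evaluation map is arithmetic in $X$, hence in $\HYP$, so $c$ is completely determined in $\HYP$; it remains to see $B_c\cap\HYP=A$. Think of $c$ as racing the two $\Sigma_1$-searches against each other along $H$. If $X\in A$, then $\exists u\,\phi_0$ first holds at some genuine level $L_{\alpha_0}(X)$ with $\alpha_0<\omega_1^{ck}$, while $\exists v\,\psi_0$ holds at no level below $\omega_1^{ck}$ (being $\Sigma_1$ and false in $L_{\omega_1^{ck}}$), so the $h_0\in\mathrm{wf}(H)$ with $|h_0|_H=\alpha_0$ witnesses $X\in B_c$. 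Conversely, if $h$ witnesses $X\in B_c$ with $h\in\mathrm{wf}(H)$, then $L_{|h|_H}(X)\models\exists u\,\phi_0$ lifts to $L_{\omega_1^{ck}}$, giving $X\in A$; and if $h$ witnesses $X\in B_c$ with $h\notin\mathrm{wf}(H)$, then since $\mathrm{wf}(H)$ is an initial segment of $H$ the witness clause forces $\neg\Phi(X,h')\wedge\neg\Psi(X,h')$ at every $h'\in\mathrm{wf}(H)$, hence at every genuine level, so both $\exists u\,\phi_0$ and $\exists v\,\psi_0$ fail in $L_{\omega_1^{ck}}$ --- impossible, since $X\in\HYP=A\cup(\HYP\setminus A)$. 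This last case is the crux, and it is precisely where $\Delta_1$ rather than $\Sigma_1$ is used: the Harrison order is genuinely ill-founded, so the structures $M_h[X]$ with $h\notin\mathrm{wf}(H)$ may satisfy $\exists u\,\phi_0$ spuriously, and a code searching only for $A$-witnesses would be corrupted by them; running the complementary search in parallel and exploiting that $\mathrm{wf}(H)$ is an initial segment confines the decision to the genuine part of the hierarchy, and the exhaustion $A\cup(\HYP\setminus A)=\HYP$ certifies that the decision is always reached there. The remaining obligations are routine: the construction of a Harrison order and of the effective pseudohierarchy $\langle M_h[X]\rangle$ along it, the verification that $\Phi$, $\Psi$, and $B_c$ are computed by a computable finite-rank Borel code, and the bookkeeping needed to take the $\Sigma_1(L_{\omega_1^{ck}})$ definitions of $A$ and $\HYP\setminus A$ lightface (absorbing any hyperarithmetic parameter, which is itself $\Delta_1(L_{\omega_1^{ck}})$-definable).
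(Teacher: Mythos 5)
There is a genuine gap in your $(3)\Rightarrow(1)$ argument, and it is fatal to the approach. You assert that the structures $\langle M_h[X]:h\in H\rangle$ with $M_h[X]\cong L_{|h|_H}(X)$ are \emph{uniformly computable in $X$}, and consequently that $\Phi(X,h)$ is $\Sigma^0_1$ in $X$ uniformly in $h$ and that $B_c$ is cut out by a \emph{computable, finite-rank} Borel code. This is false: presenting $L_\alpha(X)$ requires roughly $\omega\cdot\alpha$ jumps of $X$ (this is exactly Proposition \ref{prop:Lsays} of the paper, relativized), so effective transfinite recursion along $H$ yields structures whose complexity grows unboundedly through the hyperarithmetic hierarchy as $h$ moves up $\mathrm{wf}(H)$; and a single set coding a jump hierarchy along all of the ill-founded $H$ relative to $X$, if it exists, computes every $X$-hyperarithmetic set and so is not available as a parameter in \HYP. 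A quick sanity check that the conclusion cannot be right: a computable finite-rank Borel code defines a lightface arithmetic set, whereas $\Delta_1(L_{\omega_1^{ck}})$ subsets of \HYP{} include sets cofinal in complexity (e.g.\ the Borel well-ordering of Corollary \ref{prop:wellordering}). Once the complexity is accounted for correctly, the natural code for your ``race'' has transfinite rank given by a nonstandard notation $\alpha\in\mathcal O^\ast\setminus\mathcal O$, and the real difficulty of the theorem appears: showing that every $X\in\HYP$ has a \emph{hyperarithmetic} evaluation map for such a code. Your proposal contains no substitute for the paper's decoration machinery (Theorem \ref{thm:main_decoration}), which exists precisely to solve this problem. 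The logical skeleton of your argument---racing the $\Sigma_1$ search for $\phi$ against the $\Sigma_1$ search for $\psi$ level by level, with the ``least level'' clause---is in fact the same idea as the paper's families $\mathcal P=\{P_\beta\}$ and $\mathcal N=\{N_\beta\}$, but without the decoration step the resulting code is not completely determined in \HYP.

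A smaller point in $(2)\Rightarrow(3)$: a determined code guarantees each $X$ a \emph{winning strategy} (a partial function on the tree), not a total evaluation map; the paper's own example $\emptyset\cap|T|$ with $T$ not completely determined shows that some $X\in\HYP$ may have no evaluation map at all for a determined code. The argument goes through verbatim with ``winning strategy'' in place of ``evaluation map'' (root values of winning strategies agree by the usual descending-sequence argument), so this is only a terminological repair, but as written the claim ``every $X\in\HYP$ has an evaluation map in \HYP'' is incorrect.
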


Definitions of completely determined and determined Borel codes are given 
in Section \ref{sec:preliminaries}.
The proof makes essential use of non-standard Borel codes and 
the method of decorating trees which was introduced in \cite{ADMSW}.

In both the Borel Dual Ramsey Theorem and Marks' theorems, some 
restriction on the coloring and/or perfect matching is known to be necessary; the failure of these theorems without the Borel condition is witnessed by straightforward choice arguments.  Strangely, the failure of these theorems in \HYP{} is witnessed by essentially the same choice arguments, albeit in a more technical form.  This is possible due to the following pathology of Borel sets in \HYP.
\begin{theorem}
In \HYP, there is a completely determined Borel well-ordering of the reals.
\end{theorem}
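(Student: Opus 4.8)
The plan is to push the canonical constructibility order $<_L$ down into \HYP{} via Theorem~\ref{thm:main}. Recall the standard fact that there is a fixed $\Sigma_1$ formula $\psi(u,v)$ in the language of set theory such that, for every limit ordinal $\alpha>\omega$, the relation $\{(x,y)\in L_\alpha^2 : L_\alpha\models\psi(x,y)\}$ is exactly the restriction of the global order $<_L$ to $L_\alpha$; in particular this restriction is a well-ordering of $L_\alpha$. Since $<_L$ is a linear order, trichotomy holds, so over $L_\alpha$ the statement $\neg\psi(u,v)$ is equivalent to $u=v\vee\psi(v,u)$, which is again $\Sigma_1$; hence $<_L\restriction L_\alpha$ is $\Delta_1(L_\alpha)$. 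Apply this with $\alpha=\omega_1^{ck}$. Fix a recursive bijection $2^\omega\times 2^\omega\to 2^\omega$, written $(x,y)\mapsto\langle x,y\rangle$, and put $R=\{\langle x,y\rangle : x,y\in\HYP,\ x<_L y\}$. Decoding a real as a pair is arithmetic, hence $\Delta_1(L_{\omega_1^{ck}})$, and inside $L_{\omega_1^{ck}}$ every real lies in $L_{\omega_1^{ck}}$, so trichotomy of $<_L$ applies to them; thus ``$z\in R$'' ($z$ decodes to $(x,y)$ with $\psi(x,y)$) and ``$z\notin R$'' ($z$ decodes to $(x,y)$ with $x=y\vee\psi(y,x)$) are each $\Sigma_1(L_{\omega_1^{ck}})$, so $R$ is $\Delta_1(L_{\omega_1^{ck}})$, and in $V$ the relation it encodes well-orders $\HYP=L_{\omega_1^{ck}}\cap 2^\omega$.

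By Theorem~\ref{thm:main} there is a completely determined Borel code $c\in\HYP$ for $R$; by the meaning of ``code for $A$'', \HYP{} computes membership in $B_c$ so that, for $x,y\in\HYP$, $\langle x,y\rangle\in B_c$ holds exactly when $x<_L y$. It remains to verify that \HYP{} believes $c$ codes a well-ordering of the reals. Totality, transitivity and antisymmetry of $B_c$ on \HYP{} reduce, via this agreement, to the corresponding properties of $<_L$ on $\HYP\subseteq L$, which hold because $<_L$ is a linear order. For well-foundedness, suppose toward a contradiction that \HYP{} contains an $f\colon\omega\to 2^\omega$ with $\langle f(n{+}1),f(n)\rangle\in B_c$ for every $n$. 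Since \HYP{} is an $\omega$-model it computes each real $f(n)$ correctly, $B_c$ agrees with $R$ in \HYP, and $\psi$ defines the genuine $<_L$ over $L_{\omega_1^{ck}}$; hence $f(0)>_L f(1)>_L\cdots$ is a true infinite $<_L$-descending chain of elements of $L$, contradicting that $<_L$ is well-founded. Therefore \HYP{} believes ``$c$ is a completely determined Borel well-ordering of the reals''.

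The only work beyond invoking Theorem~\ref{thm:main} is the bookkeeping in the previous paragraph: that \HYP{} ``understands $c$ correctly'' — the relation it computes from $c$ is literally $R$, and \HYP{} can then check the order axioms — and that the formula $\psi$ defines over $L_{\omega_1^{ck}}$ the same order as the global $<_L$. The first point is contained in the notion of ``completely determined Borel code for $A$'' from Section~\ref{sec:preliminaries} together with absoluteness of the $\Delta_1$ computation for $\omega$-models; the second is the usual admissibility analysis of the constructible order. Granting these, well-foundedness in \HYP{} follows at once from the genuine well-foundedness of $<_L$, so there is no hard combinatorial content here — whatever difficulty there is was already absorbed into the proof of Theorem~\ref{thm:main}.
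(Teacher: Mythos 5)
Your proof is correct and takes essentially the same route as the paper's: exhibit a $\Delta_1(L_{\omega_1^{ck}})$ well-ordering of the hyperarithmetic reals and feed it to Theorem~\ref{thm:main}, noting that any \HYP-internal descending sequence would be a genuine descending sequence in a genuine well-order. The only difference is the choice of well-ordering --- the paper's Corollary~\ref{prop:wellordering} orders $X$ by the least $\beta$ with $X\le_T\emptyset^{\beta}$ and then by least index, an explicitly $\omega_1^{ck}$-recursive comparison, whereas you import the uniform $\Sigma_1$ definability of $<_L$ over limit levels of $L$; both are standard and both work.
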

We use similar methods to construct choice-flavored counterexamples in \HYP{} to some other theorems of descriptive combinatorics, such as those concerning the prisoner hat problem
and various vertex and edge coloring theorems for $d$-regular graphs.

Having recreated some choice-flavored constructions, we asked how 
reliably Borel constructions in \HYP{} mimic choice constructions in the real world. We find that the analogy is not perfect, as the following result shows.
\begin{theorem}
In \HYP, there is a completely determined 
Borel acyclic graph where each vertex has degree at most 2, but which has no 
completely determined Borel 2-coloring.
\end{theorem}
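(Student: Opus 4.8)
The plan is to derive this from Theorem~\ref{thm:main}. By that theorem it suffices to build a $\Delta_1(L_{\omega_1^{ck}})$ graph $G$, on a $\Delta_1(L_{\omega_1^{ck}})$ set of vertices, that is acyclic with all degrees at most $2$ but admits no $\Delta_1(L_{\omega_1^{ck}})$ proper $2$-coloring: Theorem~\ref{thm:main} then gives a completely determined Borel code for $G$, while a completely determined Borel $2$-coloring would have completely determined Borel color classes, hence (again by Theorem~\ref{thm:main}) $\Delta_1(L_{\omega_1^{ck}})$ color classes. So from now on I would work entirely with $\Delta_1(L_{\omega_1^{ck}})$ objects.

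Next I would pin down where the obstruction must live. An acyclic graph of degree at most $2$ is a disjoint union of finite paths, one-way infinite rays, and two-way infinite lines. Arrange in addition that $G$ has no isolated vertices and that every finite component has even length. Then the finite-path and ray components present no obstruction: each vertex in such a component is joined to a vertex of degree $1$ by a unique shortest path, and the set $C_0$ of vertices joined to a degree-$1$ vertex by an even-length path is a $\Delta_1(L_{\omega_1^{ck}})$ proper $2$-coloring of that part of $G$ (both being a vertex of degree $1$ and shortest-path length are $\Delta_1(L_{\omega_1^{ck}})$, since we may equip $G$ with a $\Delta_1(L_{\omega_1^{ck}})$ neighbor function). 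So the entire difficulty is carried by the two-way infinite line components, which have no canonical origin from which to begin alternating colors. Moreover the vertex set must be a genuine set of reals: if $G$ were a $\Delta_1(L_{\omega_1^{ck}})$ graph on a copy of $\omega$, then ``lies in the same component as'' would be $\Delta_1(L_{\omega_1^{ck}})$ (a path being a finite sequence of naturals), one could $\Delta_1(L_{\omega_1^{ck}})$-select the least vertex of each component, and coloring by parity of distance to it would $2$-color $G$. So the lines of $G$ must be spread among the reals so that ``same component'' is properly $\Sigma^1_1$.

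Building the lines is where the decorating-trees method and the non-standard Borel codes of \cite{ADMSW} come in. One wants the family of $\mathbb{Z}$-lines of $G$ to encode a $\Pi^1_1$-complete amount of information, so that no $\Delta_1(L_{\omega_1^{ck}})$ set can consistently choose one of the two $2$-colorings on each line. Since the $\Delta_1(L_{\omega_1^{ck}})$ partial colorings are enumerated by a $\Pi^1_1$ set of indices, the target is a diagonalization in which the $e$-th line defeats the $e$-th candidate coloring---either the candidate is undefined somewhere on that line, or it makes some edge of it monochromatic. The reaction to the $e$-th candidate cannot be carried out in a $\Delta_1(L_{\omega_1^{ck}})$ way directly (totality of the candidate is already $\Pi^1_1$), so this $\Pi^1_1$-flavored recursion would be packaged inside a single non-standard Borel code whose evaluation in \HYP{} is the desired $\Delta_1(L_{\omega_1^{ck}})$ graph---just as such codes are used in \cite{ADMSW} to realize choice-flavored constructions inside \HYP. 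Acyclicity and the degree bound come for free, since $G$ is assembled out of genuine paths, rays, and lines.

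The main obstacle I expect is threading the needle between three competing demands: $G$ must be genuinely acyclic of degree at most $2$, so there is nowhere to hide complexity in high-degree or cyclic gadgets; $G$ must be $\Delta_1(L_{\omega_1^{ck}})$, so the diagonalization cannot be read off uniformly; and the lines must be robust enough to withstand the completely determined Borel well-ordering of the reals in \HYP{}---the very tool that, in our companion theorems, lets \HYP{} imitate choice constructions. In particular it does not suffice to observe that there is no $\Delta_1(L_{\omega_1^{ck}})$ way to select an origin on each line, since the well-ordering offers the least vertex of each component as a candidate; one must rule out every $\Delta_1(L_{\omega_1^{ck}})$ proper $2$-coloring, and that is where the finer bookkeeping of the decorated-tree construction does the real work.
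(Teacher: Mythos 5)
Your reduction to building a $\Delta_1(L_{\omega_1^{ck}})$ graph with no $\Delta_1(L_{\omega_1^{ck}})$ proper $2$-coloring is the right frame, and diagonalizing against an enumeration of candidate colorings is the right spirit. But the step where you argue that the obstruction must be carried by two-way infinite lines is wrong, and it sends the rest of the proposal in a direction that is both harder than necessary and never completed. The error is the claim that on finite-path components, ``distance to a degree-$1$ vertex'' yields a $\Delta_1(L_{\omega_1^{ck}})$ coloring. For that you assume $G$ comes equipped with a $\Delta_1(L_{\omega_1^{ck}})$ neighbor function, but a $\Delta_1(L_{\omega_1^{ck}})$ edge relation only makes ``$Y$ is a neighbor of $X$'' decidable for a given pair; actually \emph{finding} the neighbors of $X$, or certifying that $X$ has degree $1$ (i.e.\ that no second neighbor exists anywhere in $L_{\omega_1^{ck}}$), requires an unbounded search through the ordinals and is genuinely $\Sigma_1$-and-$\Pi_1$ rather than $\Delta_1$. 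The paper's construction exploits exactly this gap: its graph consists \emph{only} of finite paths (of length $2$ or $3$) and isolated vertices. For each candidate coloring $\phi_e^{\emptyset^\alpha}$ one fixes two computable reals $X_{\alpha,e,0},X_{\alpha,e,1}$ in advance, waits for the least stage $\beta$ at which $\emptyset^\beta$ computes evaluation maps giving their colors, and only then joins them by a path of length $2$ or $3$ through fresh reals Turing equivalent to $\emptyset^\beta$, choosing the parity to contradict the colors already committed to. The colorer cannot ``wait to see the whole component'' because the interior of the path lives arbitrarily high in the hyperarithmetic hierarchy relative to the endpoints. So finite components are not only an obstruction---they are the entire construction.

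Beyond that, the route you do propose (a family of $\mathbb{Z}$-lines encoding $\Pi^1_1$-complete information, packaged in a non-standard code) is left as a description of what a diagonalization should accomplish rather than a diagonalization: you never specify how the $e$-th line defeats the $e$-th candidate, and your closing paragraph concedes that the key difficulty---surviving the Borel well-ordering of the reals, which hands the colorer a canonical origin on each line---is unresolved. That difficulty evaporates once you use finite paths with computable endpoints and high-complexity interiors, because the defeat is arranged \emph{after} the candidate has committed its colors to the endpoints, not by hiding an origin.
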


We give the preliminaries in Sections \ref{sec:preliminaries} and \ref{sec:decorating},
the latter of which is devoted entirely to the method of decorating trees, making
this paper self-contained for readers already familiar with Reverse Mathematics 
and hyperarithmetic theory.  The main result characterizing the completely determined
Borel sets in \HYP~ is given in Section \ref{sec:characterization}. 
Section \ref{sec:applications} contains all of the applications.

We thank Andrew Marks for alerting us to the recent developments and status of open questions in this area, 
and the anonymous referee for a careful reading and many small improvements.  Of course,
any mistake that remains is due to the authors.

\section{Preliminaries}\label{sec:preliminaries}

We denote elements of $\omega^{<\omega}$ by $\sigma,\tau,\eta,\nu$.  We write $\sigma\preceq \tau$ to indicate that $\sigma$ is an initial segment of $\tau$, and write $\sigma\prec \tau$ if $\sigma$ is a proper initial segment of $\tau$.  We write $\sigma^\frown\tau$ for the concatenation of $\sigma$ and $\tau$.  We write $\sigma^\frown n$ as an abbreviation for $\sigma^\frown\langle n\rangle$.

Throughout, we assume familiarity with hyperarithmetic theory and reverse mathematics.
A standard reference for the former is \cite{sacks} and for the latter, \cite{sosa}.
We are primarily interested in considering notions within the second order model \HYP; this is the model of second-order arithmetic in which the natural numbers are interpreted by the usual natural numbers but the only sets present are the hyperarithmetic sets.

We write $\mathcal{O}^\ast$ for the set of ordinal notations in \HYP, and $<_\ast$ for the 
computable partial order comparing those notations.  
We will use $\alpha,\beta,\gamma,\delta$ for elements of $\mathcal{O}^\ast$ 
These notations represent the 
ordinals of \HYP{} because $\alpha\in\mathcal{O}^\ast$ if and only if there is no hyperarithmetic $<_\ast$-descending sequence below $\alpha$.  
It is well-known that there are elements $\alpha$ in $\mathcal{O}^\ast$ such that $<_\ast$ is, in fact, ill-founded below $\alpha$, but no descending sequence is hyperarithmetic.  As usual, we write $\mathcal{O}$ for the subset of $\mathcal{O}^\ast$ consisting of actual ordinals---that is, $\alpha\in\mathcal{O}$ if and only if there is no $<_\ast$-descending sequence below $\alpha$.

When care is needed in the use of notations, we use the standard notation $H_\alpha$ 
to refer to the set obtained by taking jumps along the notation $\alpha$.  When 
we only need to refer to a set in the same $\leq_T$-degree as $H_\alpha$,
we use the notation $\emptyset^\alpha$.   We often abuse notation by identifying ordinal 
notations with the ordinals they represent, writing for example
$\alpha+k$, or $\alpha +O(1)$ to refer to an ordinal which is a finite successor of $\alpha$.

\begin{definition}
  A \emph{tree} is a subset of $\omega^{<\omega}$ closed under initial segments.  When $T$ is a tree, we write $T_n=\{\sigma\mid \langle n\rangle^\frown\sigma\in T\}$.

  A \emph{labeled Borel code} is a well-founded tree $T\subseteq\omega^{<\omega}$ together with a function $\ell$ whose domain is $T$ and such that:
  \begin{itemize}
  \item for each interior node $\sigma$ of $T$, $\ell(\sigma)$ is either $\bigcup$ or $\bigcap$,
  \item for each leaf $\eta$ of $T$, $\ell(\eta)$ is a standard code for a clopen subset of $2^\omega$.
  \end{itemize}

  When $\ell(\sigma)=\bigcup$, we call $\sigma$ a \emph{union node}, and when $\ell(\sigma)=\bigcap$, we call $\sigma$ an \emph{intersection node}.
\end{definition}


We will be considering Borel codes in \HYP---that is, $T$ and $\ell$ are themselves hyperarithmetic, and there is no hyperarithmetic descending sequence in $T$.  Equivalently, $T$ has a height in $\mathcal{O}^\ast$.

We can ask for codes which make this ordinal height explicit.
\begin{definition}
  Let $\alpha\in\mathcal{O}^\ast$.  If $T\subseteq\omega^{<\omega}$ and $\rho:T\rightarrow \{\beta  \in \mathcal O^\ast : \beta \leq_\ast \alpha\}$, we say that $\rho$ \emph{ranks} $T$ if for all $\sigma$ and $n$ such that $\sigma^\frown\langle n\rangle\in T$, we have $\rho(\sigma^\frown n)<_\ast\rho(\sigma)$.  We say $T$ is \emph{$\alpha$-ranked} by $\rho$.  We call $\rho(\langle\rangle)$ the \emph{rank} of $T$.
\end{definition}

When $T,\ell$ is a true Borel code, it encodes a subset $|T|$ of $2^\omega$.  Namely:
\begin{itemize}
\item if $\langle\rangle$ is a leaf, $|T_{\langle\rangle}|$ is the clopen set coded by $\ell(\langle\rangle)$,
\item if $\ell(\langle\rangle)=\bigcup$, $T$ codes $\bigcup_n |T_n|$,
\item if $\ell(\langle\rangle)=\bigcap$, $T$ codes $\bigcap_n |T_n|$.
\end{itemize}

To make this precise in a model of second order arithmetic, we need the notion of an evaluation map.
\begin{definition}
  When $T$ is a labeled Borel code and $X\in 2^\omega$, an \emph{evaluation map} for $X\in T$ is a function $f:T\rightarrow\{0,1\}$ such that:
  \begin{itemize}
  \item if $\eta$ is a leaf, $f(\eta)=1$ if and only if $X$ is in the clopen set coded by $\ell(\eta)$,
  \item if $\sigma$ is a union node, $f(\sigma)=1$ if and only if $f(\sigma^\frown n)=1$ for some $n\in\omega$,
  \item if $\sigma$ is an intersection node, $f(\sigma)=1$ if and only if $f(\sigma^\frown n)=1$ for all $n\in\omega$.
  \end{itemize}
  We say $X$ is in the set coded by $T$, denoted $X\in|T|$, if there is an evaluation map $f$ for $X$ in $T$ such that $f(\langle\rangle)=1$,  We write $X\not\in|T|$ if there is an evaluation map $f$ for $X$ in $T$ such that $f(\langle\rangle)=0$.
\end{definition}
The statement ``for every labeled Borel code $T$ there is an $X$ which has an evaluation map in $T$'' is equivalent to \ATR{} \cite[Theorem 6.9]{DFSW}.  In particular, in \HYP{} there are labeled Borel codes 
for which no evaluation maps exist for any $X$.
In \cite{ADMSW} this is addressed by introducing the notion of a completely determined Borel code.
\begin{definition}
  A labeled Borel code $T$ is \emph{completely determined} if every $X\in 2^\omega$ has an evaluation map in $T$.
\end{definition}
Note that $\RCA$ suffices to prove that any two evaluation maps must agree. 
For if two evaluation maps disagree at some node $\sigma \in T$, then 
they must also disagree at some longer node $\sigma\concat n \in T$.  Therefore, 
from two disagreeing evaluation maps, we may recursively construct a
path through $T$, violating that $T$ is well-founded.  Formally, this argument uses 
\cite[Theorems II.3.4, II.3.5]{sosa}.

A related notion, named but not studied in \cite{ADMSW}, is a determined Borel code.
Considering a Borel code as a game played by a $\bigvee$ player against a
$\bigwedge$ player in the sense of \cite{Blackwell}, 
the code is called determined if for every $X$, one of the players has a 
winning strategy in the game.
\begin{definition}
A labeled Borel code $T$ is \emph{determined} if for every $X \in 2^\omega$, there is 
a function $f:\subseteq T \rightarrow \{0,1\}$, called a \emph{winning strategy for $X$ in $T$}, 
such that 
\begin{itemize}
\item If $\sigma$ is a leaf and $f(\sigma)$ is defined, then $f(\sigma)=1$ if and only if $X$ 
is in the clopen set coded by $\ell(\sigma)$.
\item If $\sigma$ is a union node, $f(\sigma) = 1$ implies there is some $n\in \omega$ such that $f(\sigma\concat n) = 1$, and $f(\sigma) = 0$ implies for all $n \in \omega$, if $\sigma\concat n \in T$ then $f(\sigma\concat n) = 0$.
\item If $\sigma$ is an intersection node, $f(\sigma) = 0$ implies there is some $n \in \omega$ 
such that $f(\sigma\concat n) = 0$, and $f(\sigma)=1$ implies that for all $n \in \omega$,
if $\sigma\concat n \in T$ then $f(\sigma \concat n) = 1$.
\item $f(\langle \rangle)$ is defined.
\end{itemize}
\end{definition}

It can happen that a Borel code is determined without being completely determined. 
For example, in \HYP, let $T$ be a Borel code which is not completely determined. 
Then the set $\emptyset \cap |T|$, written as a Borel code with $\bigcap$ at the root,
is determined but not completely determined in \HYP.

Given a Borel code $T$, we define a code for its complement as follows.
\begin{definition}
If $T$ is a Borel code, let $\neg T$ denote the Borel code which uses
the same tree, but modifies the labeling function as follows.  Change $\bigcap$ to 
$\bigcup$ and vice versa at all interior nodes, and at each leaf replace the 
coded clopen set with its clopen complement.
\end{definition}

It is clear that if $f$ is an evaluation map for $X$ in $T$, then $1-f$ is an 
evaluation map for $X$ in $\neg T$, and thus regardless of the model, 
$X \in |T|$ if and only if
$X\not\in |\neg T|$.

\section{Decorating Trees}\label{sec:decorating}

The main method we use is a construction from \cite{ADMSW} which takes a tree $T$ and ``decorates it'' with additional nodes to create a new Borel code.  When we perform this decoration properly, the resulting Borel code will be completely determined in \HYP.
The results of this section were essentially proved in 
\cite{ADMSW}, but to keep this paper 
self-contained, we present them here with more streamlined notation and proofs.

\begin{definition}
  Let $\alpha\in\mathcal{O}^\ast$ and let $T$ be a labeled Borel code $\alpha$-ranked by $\rho$.  Suppose $\mathcal P$ and $\mathcal N$ are two countable sets of $\alpha$-ranked labeled Borel codes.  We define the \emph{decoration} of $T$ by $\{\mathcal P, \mathcal N\}$, denoted $\mathrm{Decorate}(T,\mathcal P, \mathcal N)$, recursively by:
  \begin{itemize}
  \item if $T$ is a leaf, $T$ is unchanged,
  \item otherwise, the children of $\langle\rangle$ in $\mathrm{Decorate}(T,\mathcal P, \mathcal N)$ are given by:
    \begin{itemize}
    \item for each child $T_n$ of $T$, the tree $\mathrm{Decorate}(T_n,\mathcal P, \mathcal N)$ is a child,
    \item if $\langle\rangle$ is a union node, for each $P \in \mathcal P$ where $P$ has rank $<_\ast\rho(\langle\rangle)$, the node $\mathrm{Decorate}(P, \mathcal P, \mathcal N)$ is a child, and
    \item if $\langle\rangle$ is an intersection node, for each $N \in \mathcal N$ where $N$ has rank $<_\ast\rho(\langle\rangle)$, the node $\mathrm{Decorate}(\neg N, \mathcal P, \mathcal N)$ is a child.
    \end{itemize}
  \end{itemize}
\end{definition}

Since $T$ and all elements of $\mathcal P \cup \mathcal N$ are $\alpha$-ranked,
the restriction on the ranks of $P$ and $N$ ensures that 
$\mathrm{Decorate}(T,\mathcal P, \mathcal N)$ is also
$\alpha$-ranked.

\begin{lemma}\label{lem:wf}
  If $\alpha\in\mathcal{O}$, $X\not\in |Q|$ for every $Q \in \mathcal P \cup \mathcal N$ of rank less than $\alpha$, and $T$ is ranked in $\alpha$ then $X\in|\mathrm{Decorate}(T,\mathcal P, \mathcal N)|$ if and only if $X\in|T|$.
\end{lemma}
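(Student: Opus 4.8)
The plan is to prove this by transfinite induction on the rank of $T$, i.e., on the ordinal $\rho(\langle\rangle) \leq_\ast \alpha$, exploiting that $\alpha \in \mathcal{O}$ makes this a genuine well-founded induction. First I would dispense with the base case: if $T$ is a leaf, then $\mathrm{Decorate}(T,\mathcal P,\mathcal N) = T$ by definition, so the equivalence is trivial. For the inductive step, suppose $\langle\rangle$ is an interior node of $T$ with children $T_n$, and assume the lemma holds for every labeled Borel code of rank $<_\ast \rho(\langle\rangle)$. The key structural observation is that in $\mathrm{Decorate}(T,\mathcal P,\mathcal N)$, the root has the same label ($\bigcup$ or $\bigcap$) as in $T$, and its children fall into two groups: the ``old'' children $\mathrm{Decorate}(T_n,\mathcal P,\mathcal N)$, and the ``new'' decoration children, which are $\mathrm{Decorate}(P,\mathcal P,\mathcal N)$ for $P \in \mathcal P$ of small rank (if $\langle\rangle$ is a union node) or $\mathrm{Decorate}(\neg N,\mathcal P,\mathcal N)$ for $N \in \mathcal N$ of small rank (if $\langle\rangle$ is an intersection node).

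The heart of the argument is showing the new children contribute nothing. Consider the union-node case. Each new child has the form $\mathrm{Decorate}(P,\mathcal P,\mathcal N)$ where $P \in \mathcal P$ has rank $<_\ast \rho(\langle\rangle) \leq_\ast \alpha$. By the inductive hypothesis applied to $P$ (which is legitimate since $P$ is $\alpha$-ranked with rank $<_\ast \rho(\langle\rangle)$, and the hypothesis $X \notin |Q|$ for all $Q \in \mathcal P \cup \mathcal N$ of rank $<_\ast\alpha$ still holds), we get $X \in |\mathrm{Decorate}(P,\mathcal P,\mathcal N)|$ iff $X \in |P|$. But $P \in \mathcal P$ has rank $<_\ast \alpha$, so by assumption $X \notin |P|$, hence $X \notin |\mathrm{Decorate}(P,\mathcal P,\mathcal N)|$. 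So every new child fails to contain $X$, and since the root is a union node, these children are irrelevant to whether $X$ is in the set: $X$ is in the decorated set iff $X$ is in $|\mathrm{Decorate}(T_n,\mathcal P,\mathcal N)|$ for some $n$. Applying the inductive hypothesis to each $T_n$ (rank $<_\ast \rho(\langle\rangle)$), this holds iff $X \in |T_n|$ for some $n$, i.e., iff $X \in |T|$. The intersection-node case is dual: each new child is $\mathrm{Decorate}(\neg N,\mathcal P,\mathcal N)$; by the inductive hypothesis $X \in |\mathrm{Decorate}(\neg N,\mathcal P,\mathcal N)|$ iff $X \in |\neg N|$, and since $X \notin |N|$ we have $X \in |\neg N|$ (using the complementation fact noted before Section \ref{sec:decorating}), so every new child \emph{does} contain $X$ and hence is irrelevant to an intersection; the old children then give $X \in |T|$ by induction as before.

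There is one subtlety I would be careful about: the statement is about evaluation maps inside a model of second-order arithmetic, not about a naive ``is in the set'' relation, so ``$X \in |T|$ iff $X \in |\mathrm{Decorate}(T,\mathcal P,\mathcal N)|$'' should be read as an equivalence of the two $\Sigma^1_1$/$\Pi^1_1$-style assertions ``there is an evaluation map sending the root to $1$.'' The induction therefore needs to be phrased so that one actually builds an evaluation map for $X$ in the decorated code out of evaluation maps for $X$ in $T$ and in the (decorations of) the $Q$'s — gluing these together node by node. Since $\alpha \in \mathcal{O}$, the recursion defining this glued map is well-founded and can be carried out; this is where the hypothesis $\alpha \in \mathcal{O}$ (rather than merely $\alpha \in \mathcal{O}^\ast$) is essential, and it is the main point requiring care rather than the combinatorics of which children matter. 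I expect the main obstacle to be bookkeeping: matching up the recursive structure of $\mathrm{Decorate}$ with the recursive construction of the evaluation map, and making sure the rank bounds line up so the inductive hypothesis applies to exactly the subcodes appearing as children.
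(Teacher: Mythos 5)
Your proposal is correct and follows essentially the same route as the paper: induction on the rank of the code, with the base case trivial and the inductive step showing that the decoration children are irrelevant (a union node's new children $\mathrm{Decorate}(P,\cdot,\cdot)$ all evaluate to $0$ since $X\notin|P|$, and an intersection node's new children $\mathrm{Decorate}(\neg N,\cdot,\cdot)$ all evaluate to $1$ since $X\in|\neg N|$), so the root's value agrees with that in $T$. The paper handles your final ``subtlety'' simply by observing that, because $\alpha\in\mathcal{O}$, evaluation maps for $X$ in both codes exist and are unique, so the argument can be phrased directly as a comparison of the two maps.
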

\begin{proof}
  By induction on $\alpha$.  Let $g$ be the evaluation map for $X$ in $T$ and $h$ the evaluation map for $X$ in $\mathrm{Decorate}(T,\mathcal P, \mathcal N)$---since $\alpha$ is an actual ordinal, both exist and are unique.

  If $T$ is a leaf, this is immediate.  Otherwise, consider the children of the root in $\mathrm{Decorate}(T,\mathcal P, \mathcal N)$.  Say $\langle\rangle$ is a union node.  If there is some child $T_n$ in $T$ which $g$ assigns to $1$, then by the inductive hypothesis, $h$ must assign $1$ to the corresponding child node $\mathrm{Decorate}(T_n,\mathcal P, \mathcal N)$ in $\mathrm{Decorate}(T,\mathcal P, \mathcal N)$, so $h(\langle\rangle)=1$.  Otherwise, $g$ assigns $0$ to every child of $\langle\rangle$ in $T$.  Every child of $\langle\rangle$ in $\mathrm{Decorate}(T,\mathcal P, \mathcal N)$ is either of the form $\mathrm{Decorate}(T_n,\mathcal P, \mathcal N)$ or $\mathrm{Decorate}(P,\mathcal P, \mathcal N)$; by the inductive hypothesis and the assumption that $X\not\in |P|$, $h$ assigns $0$ to both kinds of children, so $h(\langle\rangle)=0$.

  The intersection case is symmetric: if $g$ assigns $0$ to any child $T_n$ of $\langle\rangle$ then, by the inductive hypothesis, $h$ must assign $0$ to the corresponding child node $\mathrm{Decorate}(T_n,\mathcal P, \mathcal N)$ in $\mathrm{Decorate}(T,\mathcal P, \mathcal N)$, so $h(\langle\rangle)=0$.  If $g$ assigns $1$ to every child of $\langle\rangle$ in $T$ then, since the children of $\langle\rangle$ in $\mathrm{Decorate}(T,\mathcal P, \mathcal N)$ are either of the form $\mathrm{Decorate}(T_n,\mathcal P, \mathcal N)$ or $\mathrm{Decorate}(\neg N,\mathcal P, \mathcal N)$; by the inductive hypothesis and the assumption that $X\in |\neg N|$, $h$ assigns $1$ to both kinds of children, so $h(\langle\rangle)=1$.
\end{proof}

We will be interested in the situation where we carry this operation out in \HYP.  Note that when $\alpha\in\mathcal{O}^\ast$, $T$ is in \HYP, and the collections 
$\mathcal P$ and $\mathcal N$ are enumerable in \HYP{} (that is, \HYP{}  
contains sequences $\langle P_n\rangle_{n\in\omega}$ and $\langle N_n\rangle_{n\in\omega}$ such that $\mathcal P = \{P_n: n \in \omega\}$ and $\mathcal N=\{N_n:n \in \omega\}$), 
then the labeled Borel code $\mathrm{Decorate}(T,\mathcal P, \mathcal N)$ is in \HYP{} as well.

Let $\mathcal P_\mathcal O$ denote the subset of $\mathcal P$ consisting of codes 
whose rank is well-founded, and similarly define $\mathcal N_\mathcal O$.  
The key result is the following:
\begin{theorem}\label{thm:main_decoration}
Let $\alpha\in\mathcal{O}^\ast\setminus\mathcal{O}$.
Suppose that $\mathcal P$ and $\mathcal N$ are countable collections of $\alpha$-ranked
decorations, enumerable in \HYP{}, such that
for each $X\in\HYP$, there is a unique $Q \in \mathcal P_\mathcal O \cup \mathcal N_\mathcal O$ with $X \in |Q|$.
Then there is a computable tree $T$ such that in \HYP{},
$\mathrm{Decorate}(T, \mathcal P, \mathcal N)$ is completely
determined and
$|\mathrm{Decorate}(T, \mathcal P, \mathcal N)|=\bigcup_{P \in \mathcal P_\mathcal O}|P|$.
\end{theorem}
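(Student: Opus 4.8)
The plan is to take $T$ to be the computable labeled Borel code consisting of a single union node of rank $\alpha$ together with countably many leaf‑children, each coding $\emptyset$ (of rank $0$, which is legitimate since $0<_\ast\alpha$). Then $\mathrm{Decorate}(T,\mathcal P,\mathcal N)$ is again a union node, whose children are these trivial leaves together with one copy of $\mathrm{Decorate}(P,\mathcal P,\mathcal N)$ for each $P\in\mathcal P$ of rank $<_\ast\alpha$; since $\alpha\notin\mathcal O$, this includes every element of $\mathcal P_{\mathcal O}$, so $\bigcup_{P\in\mathcal P_{\mathcal O}}|P|$ has a chance of being the value. Because evaluation maps on a code are unique wherever they exist, it suffices to produce, for each $X\in\HYP$, a hyperarithmetic evaluation map $f_X$ for $X$ in $\mathrm{Decorate}(T,\mathcal P,\mathcal N)$ and to check that $f_X(\langle\rangle)=1$ exactly when $X\in\bigcup_{P\in\mathcal P_{\mathcal O}}|P|$.

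Fix $X\in\HYP$ and let $Q_X$ be the unique element of $\mathcal P_{\mathcal O}\cup\mathcal N_{\mathcal O}$ with $X\in|Q_X|$, say of rank $\gamma_X\in\mathcal O$. The crucial order‑theoretic point is that $\gamma_X\in\mathcal O$ while $\alpha\notin\mathcal O$ and $<_\ast$ is linear below $\alpha$, so $\gamma_X<_\ast\beta$ for every ill‑founded notation $\beta\le_\ast\alpha$; in particular $\gamma_X$ lies strictly below the rank of every ill‑founded subcode that can appear. I will define $f_X$ by splitting a node $\sigma$ of $\mathrm{Decorate}(T,\mathcal P,\mathcal N)$ on its rank $\rho(\sigma)$. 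If $\rho(\sigma)\le_\ast\gamma_X$, the subtree $D_\sigma$ below $\sigma$ has well‑founded rank (decoration preserves ranks), so $D_\sigma$ is a genuine well‑founded code; moreover every $Q\in\mathcal P\cup\mathcal N$ of rank $<_\ast\rho(\sigma)$ has well‑founded rank, lies in $\mathcal P_{\mathcal O}\cup\mathcal N_{\mathcal O}$, and is distinct from $Q_X$, hence rejects $X$ by uniqueness, so by Lemma \ref{lem:wf} the value of $X$ in $D_\sigma$ equals its value in the underlying undecorated code, and I let $f_X(\sigma)$ be that unique value. If $\rho(\sigma)>_\ast\gamma_X$, I use a default governed by the node type and by which side $Q_X$ lies on: when $Q_X\in\mathcal P_{\mathcal O}$, favorable‑parity high nodes (unions) get $1$; when $Q_X\in\mathcal N_{\mathcal O}$, favorable‑parity high nodes (intersections) get $0$. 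This is exactly where the decoration does its work: a union node of rank $>_\ast\gamma_X$ has $\mathrm{Decorate}(Q_X,\mathcal P,\mathcal N)$ among its children when $Q_X\in\mathcal P$, and the value of $X$ there was computed in the previous case to be $[X\in|Q_X|]=1$; dually, an intersection node of rank $>_\ast\gamma_X$ has $\mathrm{Decorate}(\neg Q_X,\mathcal P,\mathcal N)$ among its children when $Q_X\in\mathcal N$, with value $[X\in|\neg Q_X|]=0$. High‑rank leaves are forced ($f_X=[X\in\text{the coded clopen set}]$), and the wrong‑parity high nodes (intersections when $Q_X\in\mathcal P$, unions when $Q_X\in\mathcal N$) cannot be given a constant; instead a wrong‑parity union node receives $1$ iff there is a finite descending path from it through wrong‑parity union nodes reaching a node already forced to have value $1$, and dually an intersection node receives $0$ iff such a path reaches a forced $0$.

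With $f_X$ so defined I would check: (i) $f_X$ is hyperarithmetic, since the $\le_\ast\gamma_X$ part is uniformly hyperarithmetic in $X$ and a notation for $\gamma_X$, while the remainder only quantifies over finite paths and values already computed; (ii) $f_X$ obeys the local evaluation clauses at every node — the favorable‑parity and leaf clauses are immediate from the witnessing children above, and the wrong‑parity clause holds by construction of the path definition; (iii) at the root, a union node of rank $\alpha>_\ast\gamma_X$: if $Q_X\in\mathcal P_{\mathcal O}$ then $f_X(\langle\rangle)=1$, which is correct since $X\in|Q_X|\subseteq\bigcup_{P\in\mathcal P_{\mathcal O}}|P|$; and if $Q_X\in\mathcal N_{\mathcal O}$ we must show $f_X(\langle\rangle)=0$, that is, that no finite path from the root through high union nodes reaches a node of value $1$.

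This last point is the main obstacle. Such a path can only descend through the $\mathcal P$‑decoration children of union nodes (intersection nodes are killed by the $\neg Q_X$ child) and through original children of union nodes, so it would have to terminate at a well‑founded subcode $C$ of rank $\le_\ast\gamma_X$ sitting inside some $P\in\mathcal P$ and reachable from the root of $P$ through union nodes of $P$, with $X\in|C|$. Propagating acceptance upward through those union nodes — using Lemma \ref{lem:wf} at the well‑founded stages and the favorable‑parity analysis at the stages of larger rank — would force $X\in|P|$ for a well‑founded $P\in\mathcal P_{\mathcal O}$, contradicting that the witness for $X$ lies in $\mathcal N_{\mathcal O}$. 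Making this propagation rigorous where it passes through subcodes of intermediate (possibly ill‑founded) rank is the delicate step; it is carried out by an induction organized along $\gamma_X\in\mathcal O$ — which is well‑founded even though $\alpha$, and hence the decorated tree, is not — together with the hypothesis that the sets $|Q|$ for $Q\in\mathcal P_{\mathcal O}\cup\mathcal N_{\mathcal O}$ partition $\HYP$. The same induction supplies the symmetric bookkeeping showing that the wrong‑parity defaults never conflict, which completes the verification that $f_X$ is a legitimate evaluation map and that $|\mathrm{Decorate}(T,\mathcal P,\mathcal N)|=\bigcup_{P\in\mathcal P_{\mathcal O}}|P|$ in $\HYP$.
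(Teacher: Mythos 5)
Your overall architecture is the same as the paper's: the same one--union--node tree $T$, the same split of the evaluation map into a genuine partial evaluation map $g_0$ on nodes of rank $\leq_\ast\gamma_X$ (justified by Lemma \ref{lem:wf} plus uniqueness), a constant default on ``favorable-parity'' high-rank nodes witnessed by the decoration copy of $Q_X$ (resp.\ $\neg Q_X$), and a reachability-through-same-parity-nodes rule on the remaining high-rank nodes. But there is a genuine gap exactly at the step you flag as ``the main obstacle,'' and the induction along $\gamma_X$ you invoke cannot close it, because the claim you are trying to prove there is false for the construction as you have set it up. When $Q_X\in\mathcal N_{\mathcal O}$, you need that no finite path from the root through high-rank union nodes reaches a node forced to value $1$. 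Your argument assumes such a path must bottom out in a well-founded code of $\mathcal P_{\mathcal O}$ accepting $X$, contradicting uniqueness. It need not: the uniqueness hypothesis says nothing about the \emph{ill-founded} members of $\mathcal P$. Concretely, $\mathcal P$ may contain a code $P^\ast$ whose rank notation is in $\mathcal O^\ast\setminus\mathcal O$, whose root is a \emph{union} node, and one of whose children is a rank-$0$ leaf coding $2^\omega$. Then $\mathrm{Decorate}(P^\ast,\mathcal P,\mathcal N)$ is a child of your root, the path (root) $\to$ (root of $P^\ast$) $\to$ (leaf) passes only through union nodes, and the leaf is forced to $1$ for every $X$; so your rule assigns $1$ to the root for every $X$, and the decorated code defines $2^\omega\cap\HYP$ rather than $\bigcup_{P\in\mathcal P_{\mathcal O}}|P|$. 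No propagation argument organized along $\gamma_X$ can rescue this, since the conclusion it would need to establish is simply not true here.

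The paper avoids this with a small but essential normalization that your proof omits: before decorating, replace each $P\in\mathcal P$ by a code with a trivial \emph{intersection} at its root whose only child is $P$ (incrementing ranks, and $\alpha$, by one). After this padding, every non-leaf child of the decorated root is an intersection node, so the set of nodes reachable from the root through union nodes is exactly the set of immediate children; each such child of rank $\leq_\ast\gamma_X$ rejects $X$ by Lemma \ref{lem:wf} and uniqueness, and each of rank $>_\ast\gamma_X$ is an intersection node already assigned $0$ by the favorable-parity default (via its $\mathrm{Decorate}(\neg Q_X,\mathcal P,\mathcal N)$ child). That is what makes the root evaluate to $0$ in the $\mathcal N_{\mathcal O}$ case. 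Your proof needs this normalization (or an equivalent device restricting what the root can see through union nodes); with it added, the rest of your argument matches the paper's.
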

\begin{proof}
Let $T$ be the tree $\{\langle\rangle, \langle 1 \rangle\}$ where $\langle\rangle$ is a union node and $\rho(\langle\rangle)=\alpha$, while $\langle 1 \rangle$ is a 
leaf coding $\emptyset$ which has rank 0.
 
 For technical reasons, it will be convenient to assume that each element of 
 $\mathcal P$ has an intersection at its root.  This is a harmless assumption - 
 given any enumeration of $\mathcal P$, we may simply modify each code $P$
 in it, increasing its rank by one in order to add a new root which expresses a 
 trivial intersection whose only argument is $P$.  Increasing $\alpha$ by 1 as 
 well, this addition does not 
 endanger any of the hypotheses of the theorem.
  
  The key idea is this: given a hyperarithmetic set $X$, and the unique 
  $Q \in \mathcal P_\mathcal O \cup \mathcal N_\mathcal O$ 
  such that $X\in |Q|$,
  we can find a hyperarithmetic evaluation map for $X$
  in $\mathrm{Decorate}(T,\mathcal P, \mathcal N)$.  We can always find hyperarithmetic evaluation maps for the low-ranked parts of $\mathrm{Decorate}(T, \mathcal P, \mathcal N)$.  Since many high ranked nodes will have a decorated version of $Q$
  as a subtree, we can then systematically assign values of the 
  evaluation map to these nodes.

  So let $X$ be given and let $\gamma$ be the rank of $Q$.  Since $\mathrm{Decorate}(T, \mathcal P, \mathcal N)$ is hyperarithmetic and $\gamma\in\mathcal{O}$, there is a partially defined evaluation map $g_0$ defined on all nodes of $\mathrm{Decorate}(T, \mathcal P, \mathcal N)$ with rank $\leq\gamma$.  (Such a $g_0$ can be computed in slightly more than $\gamma$ jumps from $\mathrm{Decorate}(T, \mathcal P, \mathcal N)$.)


  Suppose $Q \in \mathcal P$.  We extend $g_0$ to an evaluation map $g$ on all of $\mathrm{Decorate}(T, \mathcal P, \mathcal N)$ as follows:
  \begin{itemize}
  \item If $\sigma$ is a union node with rank $>_\ast\gamma$, $g(\sigma)=1$.  Since one of the children of $\sigma$ is a copy of $\mathrm{Decorate}(Q, \mathcal P, \mathcal N)$, which, by Lemma \ref{lem:wf}, $g_0$ must assign $1$ to, this is a correct evaluation map.
  \item If $\sigma$ is an intersection node then consider the following set of
  descendants of $\sigma$:
  \begin{multline*}\qquad D_\sigma = \{ \tau\in \mathrm{Decorate}(T,\mathcal P,\mathcal N) : \tau \succ \sigma, \tau \text{ is a union or leaf},\\ \text{and for each $\nu$ with } 
  \tau \succ \nu \succ \sigma, \text{$\nu$ is an intersection}\}.\end{multline*}
  For each $\tau \in D_\sigma$, if $\rho(\tau)\leq_\ast \gamma$, then $\tau$ 
  is in the domain of $g_0$, so we know the 
  correct value for $\sigma$ based on $g_0$.  
  If $\rho(\tau) >_\ast \gamma$, then we shall assign
  $g(\tau) = 1$, so these nodes can be safely ignored, as they can 
  only help $X$ get into the intersection at $\sigma$.
  We assign $1$ to $\sigma$ if and only if 
  every $\tau \in D_\sigma$ of rank $\leq_\ast\gamma$ has been assigned 1 by 
  $g_0$ (as defined in the previous step).  This can be 
  done uniformly in one jump of $g_0$.
  \end{itemize}
  Therefore $g$ can be computed from $g_0$ in one more jump.  It is clear that 
  $g$ satisfies the definition of an evaluation map.
  Finally, $g$ assigns $1$ to $\langle\rangle$ because this is a union node of 
  rank $\alpha >_\ast \gamma$.

  The case where $Q \in \mathcal N$ is dual, with one small addition to the argument needed to verify the value of $g(\langle \rangle)$.  We extend $g_0$ to an evaluation map $g$ by:
  \begin{itemize}
  \item If $\sigma$ is an intersection node with rank $>_\ast\gamma$ then $g(\sigma)=0$.  Since $X \not\in |\neg Q|$ and one of the children is a copy of $\mathrm{Decorate}(\neg Q,\mathcal P, \mathcal N)$, this is a correct evaluation map by Lemma \ref{lem:wf}.
  \item If $\sigma$ is a union node with rank $>_\ast\gamma$, define $D_\sigma$ in a dual way to what was done above, swapping intersections and unions:
  \begin{multline*}\qquad D_\sigma = \{ \tau\in T : \tau \succ \sigma, \tau \text{ is an intersection or leaf},\\ \text{and for each $\nu$ with } 
  \tau \succ \nu \succ \sigma, \text{$\nu$ is a union}\}.\end{multline*}  
  Each $\tau \in D_\sigma$ of rank $\leq_\ast\gamma$  is in the domain of $g_0$.  If 
  any $\tau \in D_\sigma$ has rank $>_\ast\gamma$ then we shall have $g(\tau)=0$,
  so these nodes can be safely ignored, as they cannot help $X$ get 
  into the union at $\sigma$.  
  We assign $1$ to $\sigma$ if and only if some $\tau \in D_\sigma$ of rank $\leq_\ast\gamma$ has been assigned 1 by $g_0$.
  \end{itemize}
  Again, $g$ is an evaluation map which can be computed from $g_0$ in one more jump.   
  Now we wish to show that $g(\langle \rangle)=0$.  Consider the set $D_{\langle \rangle}$.  Because every element of $\mathcal P$ has an intersection at its root, and
  $\langle \rangle$ has only a single leaf child in $T$, every child of $\langle \rangle$ in 
  $\mathrm{Decorate}(T,\mathcal P, \mathcal N)$ is an intersection or 
  leaf node.  Therefore,
  $D_{\langle \rangle}$ is exactly the set of children of $\langle \rangle$, and these 
 all take the form $\mathrm{Decorate}(P,\mathcal P, \mathcal N)$ for some 
 $P \in \mathcal P$, plus the single leaf, which has been unchanged 
 by decoration.  For each non-leaf 
 child $\tau$ with rank $\leq_\ast \gamma$,  $X \not\in |P|$, and thus by Lemma 
 \ref{lem:wf}, $X \not \in |\mathrm{Decorate}(P,\mathcal P, \mathcal N)|$ and 
 $g_0(\tau) = 0$.  Therefore, $g(\langle \rangle) = 0$, as needed.
\end{proof}

\section{Characterization of Borel sets in \HYP}\label{sec:characterization}

Our main theorem is the following.  Considering G\"odel's constructible universe $L=\bigcup_{\mu \in \mathrm{Ord}} L_\mu$, recall that
$L_{\omega_1^{ck}} \cap 2^\omega =\HYP$.

\begin{theorem}\label{thm:main2}
  For any $A \subseteq\HYP$, the following are equivalent.
  \begin{enumerate}
  \item  There is a completely determined Borel
code for $A$ in \HYP.
\item There is a determined Borel code for $A$ in \HYP.
\item  $A$ is $\Delta_1(L_{\omega_1^{ck}})$. 
\end{enumerate}
\end{theorem}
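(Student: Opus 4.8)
The plan is to prove the three implications $(1)\Rightarrow(2)\Rightarrow(3)\Rightarrow(1)$, with the bulk of the work in $(2)\Rightarrow(3)$ and $(3)\Rightarrow(1)$; the implication $(1)\Rightarrow(2)$ is immediate, since any completely determined Borel code is in particular determined (an evaluation map restricts to a winning strategy for every $X$). For $(2)\Rightarrow(3)$, I would argue that if $T$ is a determined Borel code in \HYP, then membership ``$X\in|T|$'' can be expressed by a $\Sigma_1$ formula over $L_{\omega_1^{ck}}$, namely ``there exists a winning strategy $f$ for $X$ in $T$ with $f(\langle\rangle)=1$,'' and non-membership similarly by ``there exists a winning strategy with $f(\langle\rangle)=0$.'' Since $T$ and $X$ range over \HYP${}=L_{\omega_1^{ck}}\cap 2^\omega$, and since $L_{\omega_1^{ck}}$ is admissible, the witnessing strategies, when they exist, can be found inside $L_{\omega_1^{ck}}$ (a winning strategy for a given $X$ is arithmetic in the jump hierarchy along the rank of $T$, hence hyperarithmetic, hence in $L_{\omega_1^{ck}}$). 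Determinacy of the code guarantees that exactly one of these two $\Sigma_1$ conditions holds for each $X\in\HYP$, which is exactly what it means for $A=|T|\cap\HYP$ to be $\Delta_1(L_{\omega_1^{ck}})$. Here I need to be careful that ``determined'' as defined only asserts a winning strategy exists for each $X$ in the ambient model; but the point is that such a strategy, being hyperarithmetic relative to $X$ and $T$, lands in $L_{\omega_1^{ck}}$, so the $\Sigma_1$ definition over $L_{\omega_1^{ck}}$ correctly captures membership.

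The real content is $(3)\Rightarrow(1)$, and here I would invoke Theorem \ref{thm:main_decoration} with a carefully chosen family of decorations. Suppose $A$ is $\Delta_1(L_{\omega_1^{ck}})$, say $X\in A\iff L_{\omega_1^{ck}}\models\varphi(X)$ and $X\notin A\iff L_{\omega_1^{ck}}\models\psi(X)$ with $\varphi,\psi$ both $\Sigma_1$. By the standard analysis of $\Sigma_1$ definability over $L_{\omega_1^{ck}}$ (via the stages $L_\mu$ for $\mu<\omega_1^{ck}$, and $\Sigma_1$-reflection / the fact that each $L_\mu$ for $\mu<\omega_1^{ck}$ is coded by a hyperarithmetic well-ordering), the witness ``$\varphi(X)$ holds'' amounts to ``there is $\mu<\omega_1^{ck}$ such that $L_\mu\models\varphi(X)$,'' which for a fixed hyperarithmetic $X$ is itself a hyperarithmetic-in-$X$ fact, uniformly in a notation for $\mu$. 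The plan is to build, for each ordinal notation $\beta$ (or each pair of a notation and a finite amount of data picking out a $\Sigma_1$ witness), a labeled Borel code $P_\beta$ of rank roughly $\beta$ whose evaluation-map semantics in \HYP{} say ``$X$ is the set for which the $\beta$-th potential $\varphi$-witness is genuine'' — and dually codes $N_\beta$ for the $\psi$ side. These codes are $\alpha$-ranked for a fixed nonstandard $\alpha\in\mathcal O^\ast\setminus\mathcal O$ chosen above all the genuine notations we need, and enumerable in \HYP. The crucial hypothesis of Theorem \ref{thm:main_decoration} — that for each $X\in\HYP$ there is a \emph{unique} $Q\in\mathcal P_{\mathcal O}\cup\mathcal N_{\mathcal O}$ with $X\in|Q|$ — is exactly the $\Delta_1$-ness of $A$: every $X\in\HYP$ is the subject of exactly one genuine witness, on one of the two sides, living at a genuine (well-founded) ordinal stage, while the ill-founded-rank codes $Q\notin\mathcal P_{\mathcal O}\cup\mathcal N_{\mathcal O}$ correspond to pseudo-stages and contribute nothing real. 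Theorem \ref{thm:main_decoration} then hands back a computable tree $T$ such that $\mathrm{Decorate}(T,\mathcal P,\mathcal N)$ is completely determined in \HYP{} with $|\mathrm{Decorate}(T,\mathcal P,\mathcal N)|=\bigcup_{P\in\mathcal P_{\mathcal O}}|P|=A$, as desired.

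The main obstacle, and where I would spend the most care, is the construction of the decoration codes $P_\beta$ and $N_\beta$ realizing the $\Sigma_1$ predicates: I need a uniform, \HYP-enumerable assignment $\beta\mapsto P_\beta$ of $\alpha$-ranked labeled Borel codes such that, for genuine $\beta\in\mathcal O$, $|P_\beta|\cap\HYP$ is precisely the singleton (or the piece of $A$) witnessed at stage $\beta$, \emph{and} such that for pseudo-notations $\beta\in\mathcal O^\ast\setminus\mathcal O$ the code $P_\beta$ — which necessarily is not completely determined in \HYP, and whose rank is ill-founded — causes no harm. Two sub-issues arise: first, expressing ``$L_\mu\models\varphi(X)$'' as the membership condition of a genuine Borel set requires unwinding the $\Sigma_1$ satisfaction relation into a transfinite Boolean combination of clopen conditions on $X$, keeping the rank controlled by (a fixed finite bump above) the notation for $\mu$; this is the sort of bookkeeping the ``method of decorating trees'' was designed to absorb, but writing the explicit recursion is the delicate part. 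Second, I must ensure the codes all share a single ambient ranking bound $\alpha\in\mathcal O^\ast\setminus\mathcal O$ — here I would use that $\mathcal O^\ast$ has nonstandard notations below which $<_\ast$ is ill-founded but with no hyperarithmetic descending sequence, so an $\alpha$-ranked family can include codes indexed by all of $\mathcal O^\ast$ below $\alpha$ while the well-founded part $\mathcal P_{\mathcal O}\cup\mathcal N_{\mathcal O}$ exhausts exactly the genuine stages. Once these pieces are in place, the uniqueness hypothesis of Theorem \ref{thm:main_decoration} follows from the assumption that $\varphi$ and $\psi$ genuinely define complementary subsets of \HYP, and the theorem does the rest.
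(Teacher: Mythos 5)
Your proposal is correct and follows essentially the same route as the paper: (1)$\Rightarrow$(2) trivially, (2)$\Rightarrow$(3) by observing that ``$f$ is a winning strategy for $X$ in $T$'' is a bounded-quantifier statement whose witnesses lie in $L_{\omega_1^{ck}}$, and (3)$\Rightarrow$(1) by feeding decorations $P_\beta$, $N_\beta$ coding ``$\beta$ is least with $L_\beta\models\phi(X,\bar z)$'' (resp.\ $\psi$) into Theorem \ref{thm:main_decoration}, with uniqueness supplied by $\Delta_1$-ness. The one step you defer---uniformly producing $\omega\cdot\beta+O(1)$-ranked Borel codes for ``$L_\beta\models\phi(X,\bar z)$'' for all notations $\beta<_\ast\alpha$, standard and nonstandard alike---is exactly what the paper's Proposition \ref{prop:Lsays} supplies via presentations of $L_\beta$ computed from $H_{\omega\cdot\beta}$.
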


Before proving this, recall that
for any $\Sigma_1$ formula $\theta(x)$ in the language of set theory, 
we have that $L_{\omega_1^{ck}}\models \theta(x)$ 
if and only if there is some $\alpha < \omega_1^{ck}$ such that $L_\alpha \models \theta(x)$.
Therefore, it will be useful to bound the complexity of deciding facts
about $L_\alpha$.  In short, it is well-known that 
$\emptyset^{\omega\cdot\alpha}$ can compute 
a presentation of $L_\alpha$, but we give a (rather standard) proof here,
 because we also need to take a little care with 
the ordinal notations when using this claim.
Specifically, we give an algorithm which computes a presentation 
of $L_\alpha$ given $H_{\omega\cdot \alpha}$, where
$\omega\cdot \alpha$ is the notation defined as follows.  Let 
$\omega\cdot \alpha = 3\cdot 5^{e(\alpha)}$, where $e$ is defined
recursively by
$$\phi_{e(\alpha)}(n) = \begin{cases} \omega\cdot \alpha_n & \text{ if } \alpha = \lim_n \alpha_n\\
\omega\cdot(\alpha-1) + n & \text{ if } \alpha \text{ is a successor.}\end{cases}$$
Here the ``$+n$'' in the second line is shorthand for a height $n$ tower of 2's.
Representing the notations for $\omega \cdot \alpha$ in this way gives us
a uniform procedure which finds, for each $\beta <_\ast \alpha$,
compatible notations $\omega \cdot \beta <_\ast \omega\cdot \alpha$.

\begin{prop}\label{prop:Lsays}
There is a computable procedure which, 
given $\alpha \in \mathcal O$ and $H_{\omega\cdot \alpha}$, 
returns a presentation $\Theta_\alpha$ of $L_\alpha$
(in the language of set theory, $\{\epsilon\}$).  
Furthermore, the procedure can be chosen so that the 
presentations have two nice properties:  
\begin{enumerate}
\item Whenever $\beta <_\ast \alpha$,
the restriction of $\Theta_\alpha$ to the domain of $\Theta_\beta$
is equal to $\Theta_\beta$ and is an $\epsilon$-initial segment 
of $\Theta_\alpha$.
\item The common $\Theta_\omega$ is a computable copy of 
$L_\omega$.  In particular there is a computable bijection between
the natural numbers and their representatives in 
$\Theta_\omega$.
\end{enumerate}
\end{prop}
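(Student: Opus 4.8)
I will define $\Theta_\alpha$ by effective transfinite recursion along $<_\ast$ on $\mathcal O$, package the recursion into a single computable procedure via the recursion theorem, and verify properties (1)--(2) by transfinite induction on $\alpha\in\mathcal O$. Here a \emph{presentation} of $L_\alpha$ means a structure in the language $\{\epsilon\}$ with domain a subset of $\omega$ and with atomic diagram---including the equality relation---computable from $H_{\omega\cdot\alpha}$ (I allow distinct domain elements to name the same set, with the equality relation recording this); I maintain the invariant $\dom(\Theta_\beta)\subseteq\dom(\Theta_\alpha)$ for $\beta<_\ast\alpha$, so that (1) says $\Theta_\alpha$ restricted to $\dom(\Theta_\beta)$ is literally $\Theta_\beta$. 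For notations $\alpha\le\omega$ I hard-code $\Theta_\alpha$: let $\Theta_\omega$ be the Ackermann presentation of $L_\omega=\mathrm{HF}$ ($m\mathrel{\epsilon}n$ iff the $m$th binary digit of $n$ is $1$), with domain all of $\omega$; this is computable and carries the computable bijection required by (2), and for finite $n$ let $\Theta_n$ be its restriction to the hereditarily finite sets of rank $<n$.

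\textbf{Successor step }$\beta\mapsto\beta+1$. Recall $L_{\beta+1}=\operatorname{Def}(L_\beta)$, the subsets of $L_\beta$ first-order definable over $(L_\beta,\epsilon)$ with parameters. By induction $\Theta_\beta\le_T H_{\omega\cdot\beta}$, so the full first-order theory of $\Theta_\beta$ with parameters is computable from $(\Theta_\beta)^{(\omega)}\le_T(H_{\omega\cdot\beta})^{(\omega)}$; and $(H_{\omega\cdot\beta})^{(\omega)}\equiv_T H_{\omega\cdot(\beta+1)}$ precisely because the notation $\omega\cdot(\beta+1)=3\cdot5^{e(\beta+1)}$ was chosen with $\phi_{e(\beta+1)}(n)$ the notation for $\omega\cdot\beta+n$. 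Enumerate in a fixed computable order all ``terms'' $(\varphi,\bar a)$ with $\varphi$ a formula with a distinguished free variable and $\bar a$ from $\dom(\Theta_\beta)$; the term $(\varphi,\bar a)$ names $\{b:\Theta_\beta\models\varphi(b,\bar a)\}$. Using transitivity of $L_\beta$, the questions ``does this term name a set not already in $L_\beta$'', ``do these two terms name the same set'', ``is the set named by one a member of the set named by the other'', and the analogous questions involving an old code $d\in\dom(\Theta_\beta)$, all reduce to the truth in $\Theta_\beta$ of a single sentence with parameters, hence are decidable from $H_{\omega\cdot(\beta+1)}$. Let $\dom(\Theta_{\beta+1})$ be $\dom(\Theta_\beta)$ together with a fresh code for each term naming a set not in $L_\beta$, and read off $\epsilon$ and equality from those queries. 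Then $\Theta_{\beta+1}\le_T H_{\omega\cdot(\beta+1)}$, it agrees with $\Theta_\beta$ on $\dom(\Theta_\beta)$, and---again by transitivity of $L_\beta$---no new code $\epsilon$-precedes an old one, so $\Theta_\beta$ is an $\epsilon$-initial segment; with the induction hypothesis this gives (1) at $\beta+1$.

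\textbf{Limit step }$\alpha=\lim_n\alpha_n$. The notation $\omega\cdot\alpha=3\cdot5^{e(\alpha)}$ with $\phi_{e(\alpha)}(n)$ the notation $\omega\cdot\alpha_n$ gives $H_{\omega\cdot\alpha}=\bigoplus_n H_{\omega\cdot\alpha_n}$, so the $\Theta_{\alpha_n}$ are uniformly computable from $H_{\omega\cdot\alpha}$; and any $\beta<_\ast\alpha$ has $\beta\le_\ast\alpha_n$ for some $n$ with $\omega\cdot\beta\le_\ast\omega\cdot\alpha_n$ (by the remark preceding the proposition), so $H_{\omega\cdot\alpha}$ uniformly computes $H_{\omega\cdot\beta}$. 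Since $L_\alpha=\bigcup_n L_{\alpha_n}$ and the $\Theta_{\alpha_n}$ cohere by the inductive form of (1), put $\Theta_\alpha=\bigcup_n\Theta_{\alpha_n}$; this is a presentation of $L_\alpha$ computable from $H_{\omega\cdot\alpha}$, and for $\beta<_\ast\alpha$ coherence follows from $\Theta_\alpha\supseteq\Theta_{\alpha_n}\supseteq\Theta_\beta$ for the relevant $n$. Feeding these three cases to the recursion theorem produces the uniform procedure, whose correctness, totality, and properties (1)--(2) are exactly what the transfinite induction above establishes.

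\textbf{Main obstacle.} The real content is the bookkeeping that makes the presentations genuinely \emph{coherent}: codes introduced at level $\beta$ must never be disturbed later, and the ``$\epsilon$-initial segment'' property must survive both the successor step (immediate from transitivity of $L_\beta$) and the limit union (immediate once the $\Theta_{\alpha_n}$ cohere), while the equality relation absorbs the non-injectivity introduced at successors. Equally delicate is tracking the ordinal notations precisely enough that ``$\omega$ further jumps'' at a successor is exactly the gap from $H_{\omega\cdot\beta}$ to $H_{\omega\cdot(\beta+1)}$, and that at a limit $H_{\omega\cdot\alpha}$ uniformly recovers every $H_{\omega\cdot\beta}$ with $\beta<_\ast\alpha$; this is exactly what the preassigned notation $\omega\cdot\alpha=3\cdot5^{e(\alpha)}$ is there to guarantee. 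Everything else---that $L_{\beta+1}=\operatorname{Def}(L_\beta)$, that the first-order theory of a presented countable structure $M$ is computable from $M^{(\omega)}$, and that effective transfinite recursion along $\mathcal O$ is legitimate via the recursion theorem---is standard.
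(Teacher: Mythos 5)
Your proof is correct and follows essentially the same route as the paper's: effective transfinite recursion along the notations, using the extra $\omega$ jumps encoded in $\omega\cdot(\beta+1)$ to decide the full first-order theory of $\Theta_\beta$ at successor steps and taking coherent unions at limits. The only real difference is cosmetic: the paper uses those same jumps to detect and discard duplicate definitions so that $\Theta_{\beta+1}$ is injective, whereas you keep one code per term and carry an explicit congruence for equality; both conventions support the downstream use in the main theorem.
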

\begin{proof}
We consider the domain 
of each $\Theta_\beta$ as a subset of $\mathbb N \times \mathbb N$.
For each infinite successor
notation $\beta\leq_\ast \alpha$, we reserve the column
$\mathbb N \times \{\beta\}$
for the elements of $\Theta_\beta \setminus \Theta_{\beta-1}$.

We proceed by effective transfinite recursion, and begin
with a computable presentation $\Theta_\omega$ 
of $L_\omega$, using $\mathbb N \times \{\omega\}$
as the domain, and choosing this presentation to satisfy 
the second niceness condition above.

Given $\alpha = \lim_n \alpha_n$ and $H_{\omega\cdot\alpha}$,
we define $\Theta_\alpha = \bigcup_n \Theta_{\alpha_n}$, 
which is uniformly computable from $H_{\omega\cdot\alpha}$
because the $n$th column of $H_{\omega\cdot\alpha}$ 
suffices to compute all atomic facts about $\Theta_\alpha$
involving elements from $\Theta_{\alpha_n}$.

Given $\alpha = \beta+1$ and $H_{\omega\cdot \alpha}$,
we can uniformly obtain $H_{\omega\cdot \beta +n}$
for each $n$.  Use $H_{\omega\cdot\beta}$ to obtain 
$\Theta_\beta$, and then add elements of 
$\mathbb N \times \{\alpha\}$ to the domain of $\Theta_\alpha$
as follows.  Let $(\phi_1,\bar z_1), (\phi_2,\bar z_2),\dots$ be some
canonical enumeration
of formula-parameter pairs (with the parameters in $\bar z$ drawn from 
$\Theta_\beta$) such that 
$$\operatorname{Def}(\Theta_\beta) = \left\{ \{ y  \in \Theta_\beta : \Theta_\beta\models \phi_i(y,\bar z_i)\} : i \in \omega\right\}$$

For each pair $(\phi_i, \bar z_i)$, ask
$H_{\omega\cdot \alpha}$ whether there is already some
$w \in \Theta_\beta$ such that for all $y \in \Theta_\beta$,
$$\Theta_\beta \models y \in w \iff \Theta_\beta \models \phi_i(y, \bar z_i).$$
Similarly ask if there is some $j<i$ such that for all $y \in \Theta_\beta$,
$$\Theta_\beta \models \phi_j(y, \bar z_j) \iff \Theta_\beta \models \phi_i(y, \bar z_i)$$
If either answer is yes, the defined set is already accounted for and can 
be ignored; if not, use a new element of $\mathbb N \times \{\alpha\}$
to represent a set with membership facts as above.
Because $\Theta_\beta$ is computable from $H_{\omega \cdot \beta}$
and all finite jumps of this set are available in $H_{\omega \cdot \alpha}$,
the latter can compute all these new facts. 
\end{proof}

\begin{proof}[Proof of Theorem \ref{thm:main2}]
(1) $\implies$ (2) is clear.  

(2) $\implies$ (3). If $T$ is a determined Borel code for $A$ in \HYP, then
the statement ``$f$ is a winning strategy for $X$ in $T$'' 
can be expressed in the language of set theory using only bounded quantifiers,
so both $A$ and $\HYP\setminus A$ are $\Sigma_1(L_{\omega_1^{ck}})$.

(3) $\implies$ (1).  Suppose that $A$ is $\Delta_1(L_{\omega_1^{ck}})$.  
Then there is a finite list of parameters
$\bar z \in L_{\omega_1^{ck}}$ and two $\Sigma_1$ 
formulas $\phi$ and $\psi$ such that for all $X \in 2^\omega$, 
$$X \in A \iff L_{\omega_1^{ck}}\models \phi(X,\bar z)  \text{   and  }  X\not\in A \iff L_{\omega_1^{ck}}\models \psi(X,\bar z).$$

We will define a completely determined Borel code for $A$ as follows.
Fix $\alpha \in \mathcal O^\ast\supseteq\mathcal{O}$.  We use decorations 
$\mathcal P = \{P_\beta : \gamma \leq_\ast \beta \leq_\ast \alpha\}$ and 
$\mathcal N = \{N_\beta : \gamma \leq_\ast \beta \leq_\ast \alpha\}$, where $\gamma$ is large enough that all elements of $\bar z$ are in $L_\gamma$.
We shall define $P_\beta$ to satisfy
$$|P_{\beta}| = \{X \in L_\beta : \beta \text{ is least such that } L_\beta \models \phi(X,\bar z)\}$$
and similarly for $N_{\beta}$ but using $\psi$.  
We now show how to computably
enumerate $\alpha$-ranked 
Borel codes for these sets $P_\beta$ and $N_\beta$, such that $P_\beta$
and $N_\beta$ each have rank $\omega\cdot\beta + O(1)$.

By the first niceness condition in Proposition \ref{prop:Lsays}, 
if $\beta\geq_\ast \gamma$,
then the elements of $\dom \Theta_\beta$ which represent the
parameters in $\bar z$ are in fact elements of $\dom \Theta_\gamma$ 
and do not depend on $\beta$. Therefore, without confusion we may also use 
the notation $\bar z$ to refer to those elements of $\dom \Theta_\gamma$
which represent the parameters $\bar z$ from $L_{\omega_1^{ck}}$.

Thus we have for all $X \in 2^\omega$ and $\beta\geq_\ast \gamma$,
$$ L_\beta \models \phi(X,z) \iff  \exists x \in \Theta_\beta[ x \text{ represents } X \text{ and } \Theta_\beta \models \phi(x, \bar z)]$$
The effective Borel complexity of 
``$\Theta_\beta\models \phi(x, \bar z)$'' is $\omega\cdot\beta + O(1)$,
with a constant that depends on $\phi$, specifically on the number
of quantifiers in $\phi$ (including bounded quantifiers, which will still require 
an unbounded search through $\dom \Theta_\beta$ in second order 
arithmetic).  
This is
because $H_{\omega\cdot\beta}$ uniformly computes the atomic diagram of 
$\Theta_\beta$, so the truth of $\phi(x,\bar z)$ is uniformly arithmetic 
in that diagram.

The effective Borel complexity of ``$x$ represents $X$'' is also 
$\omega\cdot\beta + O(1)$ using the second niceness condition 
in Proposition \ref{prop:Lsays}.  Let $h$ be a computable function 
such that $h(n) \in \dom \Theta_\omega$ represents the number $n$.
Then
$$``x \text{ represents } X\text{''} \iff \forall n \left[ X(n) = 1 \iff \Theta_\beta \models h(n) \in x\right].$$

Therefore, defining
$$|\hat P_\beta| := \{X \in 2^\omega : L_\beta \models \phi(X,\bar z)\}$$
we see this set has effective Borel complexity 
$\omega\cdot \beta + O(1)$.  Furthermore, the code $\hat P_\beta$ 
is obtainable and $\omega\cdot\beta+O(1)$-ranked,
uniformly in  $\beta$.  We define $\hat N_\beta$ 
similarly.  Then the desired decorations are
$$|P_{\beta}| := |\hat P_\beta| \setminus \left(\bigcup_{\delta<_\ast\beta} |\hat P_\delta|\right)$$
and similarly for $N_{\beta}$.  These decorations
are also uniformly $\omega\cdot\beta + O(1)$-ranked.

The computable procedure $\beta \mapsto P_{\beta}$ outlined above can 
also be applied to elements of $\mathcal O^\ast$, producing 
pseudo-ranked decorations for all $\beta <_\ast \alpha$.  
We apply 
Theorem \ref{thm:main_decoration} to the $(\omega\cdot \alpha)$-ranked
sets of decorations $\mathcal P$ and $\mathcal N$ constructed here.
The result is a completely determined
Borel code in \HYP{} which defines the set 
$A=\bigcup_{\beta \in \mathcal O} |P_{\beta}|$, as desired.
\end{proof}

\section{Applications}\label{sec:applications}

In light of Theorem \ref{thm:main2}, we can show that various sets have completely determined Borel codes in $\HYP$ by specifying an $\omega_1^{ck}$-recursive algorithm for computing them.  This allows us to know what \HYP{} believes 
about various theorems involving Borel sets.  We have selected some 
representative examples from a variety of areas.
The reader can surely supply many more 
examples than the ones given in this section.  

In this section we assume familiarity with 
$\alpha$-recursive computations; a reference is \cite{Shore1977}.
Theorem \ref{thm:main2} also shows that 
in $\HYP$, the determined Borel sets and the completely determined Borel sets 
coincide.  In this section, we simply use the terminology ``Borel'' to refer to this common 
concept.

\subsection{Well-Ordering and the Prisoner Hat Problem}


\begin{cor}\label{prop:wellordering}
  In \HYP, there is a Borel well-ordering of the universe.
\end{cor}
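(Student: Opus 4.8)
The plan is to invoke Theorem~\ref{thm:main2}: it suffices to exhibit a $\Delta_1(L_{\omega_1^{ck}})$ subset of $\HYP$ which codes a well-ordering of the reals. The canonical candidate is the restriction to $\HYP$ of G\"odel's constructibility order $<_L$. Fix a $\Delta_0$ pairing $Z=X\oplus Y$ on $2^\omega$ (say $Z(2n)=X(n)$ and $Z(2n+1)=Y(n)$), and set
$$A=\{\,X\oplus Y : X<_L Y\,\}.$$
Since $\HYP$ is closed under this pairing and its inverse, $A\subseteq\HYP$; and a Borel code for $A$ in $\HYP$, composed with the computable decoding $Z\mapsto(X,Y)$, is precisely a Borel code for a binary relation on the reals of $\HYP$. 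It remains to bound the complexity of $A$ and then to verify that $\HYP$ sees this relation as a well-ordering.

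For the complexity, recall that $<_L$ is defined level-by-level in a $\Sigma_1$-uniform way, and that $L_{\omega_1^{ck}}\models\theta$ for a $\Sigma_1$ formula $\theta$ iff $L_\alpha\models\theta$ for some $\alpha<\omega_1^{ck}$ (admissibility of $\omega_1^{ck}$). Hence ``$X<_L Y$'' is equivalent to ``$\exists\alpha\,(X,Y\in L_\alpha\wedge L_\alpha\models X<_L Y)$'', a $\Sigma_1(L_{\omega_1^{ck}})$ condition. Because $<_L$ is a total order of $L\cap 2^\omega$ and every real of $\HYP=L_{\omega_1^{ck}}\cap 2^\omega$ lies in some $L_\alpha$ with $\alpha<\omega_1^{ck}$, the complement of $A$ within $\HYP$, namely $\{\,X\oplus Y : Y<_L X\ \text{or}\ X=Y\,\}$, is $\Sigma_1(L_{\omega_1^{ck}})$ for the same reason. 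Thus $A$ is $\Delta_1(L_{\omega_1^{ck}})$, and Theorem~\ref{thm:main2} provides a completely determined Borel code, which I will call $<_{\mathrm{Bor}}$, for $A$ in $\HYP$.

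Finally I would check that $\HYP$ believes $<_{\mathrm{Bor}}$ is a well-ordering. Since the code is completely determined, $\HYP$ has evaluation maps deciding ``$X\oplus Y\in A$'' for all reals $X,Y$ of $\HYP$; as $<_L$ is genuinely a strict linear order on $\HYP$, the model verifies irreflexivity, transitivity, and trichotomy (each being an arithmetic assertion about the relevant evaluation maps that holds because it holds of $<_L$). Well-foundedness is automatic: an infinite $<_{\mathrm{Bor}}$-descending sequence in $\HYP$ would be an infinite $<_L$-descending sequence of reals, and no such sequence exists even in $V$, since $<_L$ well-orders $L\cap 2^\omega$. I expect the only delicate point to be the complexity bookkeeping in the middle step, specifically checking that the complement of $A$, and not merely $A$ itself, is $\Sigma_1$; this is exactly what upgrades the easy $\Sigma_1$ estimate to the $\Delta_1$ hypothesis required by Theorem~\ref{thm:main2}.
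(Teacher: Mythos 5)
Your proof is correct and follows essentially the same strategy as the paper: exhibit a canonical well-ordering of the hyperarithmetic reals whose graph and complement are both $\Sigma_1(L_{\omega_1^{ck}})$, and invoke Theorem~\ref{thm:main2}. The only difference is the choice of well-order --- the paper orders reals by the least $\beta$ with $X\le_T\emptyset^\beta$ (then by index), verified via an $\omega_1^{ck}$-recursive search, whereas you use $<_L$ and verify $\Delta_1$-ness from the uniform $\Sigma_1$ definition of the constructibility order together with its totality; both routes are sound and yield the same conclusion.
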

\begin{proof}
  We will associate hyperarithmetic reals $X\in 2^\omega$ with the value $o(X)=(\beta,e)$ where $\beta$ is least such that $X\leq_T\emptyset^\beta$ and $e$ is least such that $X=\phi_e^{\emptyset^\beta}$, and encode the ordering $X<Y$ if and only if $o(X)<o(Y)$, where $<$ is the lexicographic ordering on pairs.  Since $<$ is certainly a well-ordering, this will give the claim.

  On input $X,Y$, our algorithm can search for the first $\beta$ such that either $X\leq_T\emptyset^\beta$ or $Y\leq_T\emptyset^\beta$, and we can then check if $o(X)<o(Y)$ by checking an initial segment of the sets $\phi_e^{\emptyset^\beta}$ to see which of $X$ and $Y$ is computed first.
\end{proof}

Next recall the \emph{infinite prisoner hat problem}: we assume there is
a row of hat-wearing prisoners with order type $\omega$.  The hats can be red 
or blue.  The prisoners are facing toward the infinite end of the line,
so that each prisoner can see all the hat colors in front of them, but not 
their own hat color or the color of any previous hat.  The prisoners will 
be asked to name their own hat color, starting with the 0th prisoner
and going in order, so that each prisoner hears all the previous guesses.  
They win if they make one or fewer mistakes in total.  

It is well-known 
(see for example \cite{HardinTaylor2008}) that while the prisoners can 
win this game with the axiom of choice, 
there is no Borel winning strategy for them.  
But in $\HYP$, the
situation mirrors the real world and does so with the usual proof.

Formally, a Borel winning strategy 
for the prisoners is a Borel subset $B\subseteq 2^{<\omega}\times 2^\omega$.
A prisoner who hears the sequence $\tau \in 2^{<\omega}$
and sees the sequence $Y \in 2^\omega$ in front of them follows the 
strategy by guessing blue if $(\tau, Y) \in B$ and guessing red otherwise.

\begin{cor}
In $\HYP$, there is a Borel winning strategy 
for the prisoners in the infinite prisoner hat problem.
\end{cor}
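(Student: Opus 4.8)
The plan is to reduce this to Corollary~\ref{prop:wellordering} and Theorem~\ref{thm:main2} by exhibiting an $\omega_1^{ck}$-recursive algorithm computing a winning strategy. The classical proof (with choice) fixes a well-ordering of $2^\omega$ and, for each equivalence class under the relation ``eventually equal,'' designates the least representative; each prisoner, seeing the infinite tail $Y$ in front of them, knows which equivalence class the true coloring lies in, computes the designated representative $R$ of that class, and guesses according to $R$. Since the true coloring and $R$ agree from some point on, all but finitely many prisoners guess correctly; a short argument about the order in which disagreements are resolved shows exactly one mistake is made in the worst case, but for our purposes it suffices that the number of mistakes is finite, which already follows since $R$ and the true coloring are eventually equal, and in fact we get the ``one or fewer'' bound by having prisoner $n$ guess the value $R(n)$ regardless.

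First I would make precise the set $B \subseteq 2^{<\omega}\times 2^\omega$ that we claim is Borel. Given $(\tau, Y)$ with $|\tau| = n$, the prisoner reconstructs the purported true coloring on coordinates $\geq n$ from $Y$, namely the coloring whose value at $m \geq n$ is $Y(m-n)$ (or however $Y$ encodes the visible tail); the tail of the true coloring is thus coded by $Y$ and does not depend on $\tau$. Let $R_Y \in 2^\omega$ be the $<$-least element of $2^\omega$ that is eventually equal to (the coloring coded by) $Y$, where $<$ is the well-ordering from Corollary~\ref{prop:wellordering}. Define $(\tau, Y) \in B$ if and only if $R_Y(|\tau|) = 1$. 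A prisoner following $B$ guesses $R_Y$ at their own coordinate, so since the true coloring is eventually equal to $R_Y$, all but finitely many prisoners guess correctly, and a standard argument gives at most one mistake.

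Next I would check that $B$ is $\omega_1^{ck}$-recursive, hence by Theorem~\ref{thm:main2} has a completely determined Borel code in \HYP. The only non-trivial computation is $Y \mapsto R_Y$. Using the description of $o(X) = (\beta, e)$ from the proof of Corollary~\ref{prop:wellordering}, an $\omega_1^{ck}$-recursive procedure searches for the least $\beta$ such that some $X \leq_T \emptyset^\beta$ is eventually equal to $Y$ and $Y \leq_T \emptyset^{\beta+1}$ (so that the relation ``$X$ is eventually equal to $Y$'' is decidable relative to $\emptyset^{\beta+1}$), then among such $X$ finds the least index $e$ with $\phi_e^{\emptyset^\beta}$ eventually equal to $Y$, and finally reads off $R_Y = \phi_e^{\emptyset^\beta}$. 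Each step is an $\alpha$-recursive search as in Corollary~\ref{prop:wellordering}, so the whole map is $\omega_1^{ck}$-recursive, uniformly in $Y$ and $n = |\tau|$. Therefore $B$ is $\Delta_1(L_{\omega_1^{ck}})$, and by Theorem~\ref{thm:main2} it has a completely determined Borel code in \HYP.

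The main obstacle is bookkeeping rather than conceptual: one must confirm that the relation ``$X$ and $Y$ are eventually equal'' can be made decidable at a bounded level of the hyperarithmetic hierarchy relative to the $\emptyset^\beta$ witnessing where $Y$ and its least equivalent real live, so that the search terminates $\omega_1^{ck}$-recursively; this is where care with which jump is needed (comparing $Y$ against $\phi_e^{\emptyset^\beta}$) matters. Once that is in hand, the verification that the resulting $B$ is a winning strategy --- that the designated representative $R_Y$ is the same for every prisoner (it depends only on the eventual-equality class, which all prisoners can see) and that disagreements with the true coloring are confined to at most one coordinate --- is the familiar argument for the hat problem and carries over verbatim in \HYP.
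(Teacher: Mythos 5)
Your reduction to Corollary~\ref{prop:wellordering} and the $\omega_1^{ck}$-computability analysis are sound and match the paper's approach: the map $Y \mapsto R_Y$ (search for the least $\beta$ with a witness, then the least index $e$) is exactly the kind of $\omega_1^{ck}$-recursive search the paper performs, and Theorem~\ref{thm:main2} then yields the Borel code. The problem is with the strategy itself. You define $(\tau,Y)\in B$ iff $R_Y(|\tau|)=1$, i.e.\ each prisoner simply announces the designated representative's value at their own coordinate, and you assert that this ``regardless'' gives the one-or-fewer-mistakes bound. It does not: if the true coloring differs from $R_Y$ at, say, coordinates $3$ and $7$, then prisoners $3$ and $7$ both guess wrong, and in general this strategy produces finitely many mistakes but with no bound. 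The winning condition in the paper's statement of the problem is \emph{at most one} mistake in total, so your $B$ is not a winning strategy.

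The missing ingredient is the parity trick, which is what the paper's proof uses: all prisoners identify the same representative $X$ of the eventual-equality class of $\tau\concat 0\concat Y$; prisoner $0$ sacrifices their guess to announce the parity of the (finite) set of disagreements between $X$ and the hats in front of them; and prisoner $i>0$, having heard the guesses in $\tau$ (all of which after the $0$th are correct) and seeing $Y$, can then deduce their own hat color exactly. Note that this forces $B$ to depend on the \emph{content} of $\tau$, not just its length $|\tau|$ --- your $B$ discards the heard guesses entirely, so it cannot implement this. The fix costs nothing computationally (computing a finite parity from $X$, $\tau$, and $Y$ is arithmetic given the representative), so once you replace your $B$ with the parity-based strategy the rest of your argument goes through.
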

\begin{proof}
By Corollary \ref{prop:wellordering},
as part of an $\omega_1^{ck}$-computation, we may search for the 
least real which has a given arithmetic property.

The strategy for the prisoners is then defined in the 
classical way, which we include for completeness.  
Each prisoner, hearing $\tau$ and seeing $Y$, 
begins by identifying the least 
real $X$ which agrees up to finitely many errors with $\tau\concat 0 \concat Y$.  
Since all prisoners use the same well-ordering,
they all identify the same $X$.  The 0th prisoner uses their 
guess to communicate the parity of errors between $X$ and the 
rest of the hats.  The $i$th prisoner, upon hearing the correct 
guesses of prisoners 1 through $i-1$, can then deduce their own 
hat color correctly by computing the parity of errors between 
$X$ and the hats they have seen and heard.  Observe that 
this prisoner strategy is $\omega_1^{ck}$-computable,
and thus Borel in $\HYP$.
\end{proof}

\subsection{Graphs}\label{subsection:graphs}

On the basis of the previous subsection, one might wonder if any 
construction that works by choice in the real world would work in a Borel way 
in $\HYP$.  The examples given in the next two examples show that this is not the case. 
Recall that a {\it $2$-coloring} of a graph $G = (V, E)$ is a function $c: V \to 2$ that assigns adjacent vertices to different colors. Classically, a graph has a $2$-coloring if and only if it has no odd cycles.  In second order arithmetic,
we consider graphs for which $V \subseteq 2^\omega$.  The graph 
$G$ is Borel if $V$ is Borel and $E$ is a Borel subset of $V \times V$.

\begin{prop}\label{prop:wo}
  In \HYP, there is a Borel acyclic graph with maximum degree $2$ which has no Borel 2-coloring.
\end{prop}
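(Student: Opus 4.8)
The plan is to use Theorem~\ref{thm:main2} in both directions. To exhibit the graph $G=(V,E)$ I will specify $V\subseteq 2^\omega$ and $E\subseteq V\times V$ by an $\omega_1^{ck}$-recursive recipe, so that $V$ and $E$ have completely determined Borel codes in \HYP{} and hence $G$ is Borel there; to see that $G$ has no Borel $2$-coloring in \HYP{} it then suffices, again by Theorem~\ref{thm:main2}, to rule out every $\Delta_1(L_{\omega_1^{ck}})$ function $c\colon V\to 2$. Since $G$ is to be acyclic of maximum degree $2$, each connected component is a finite path, a one-way-infinite ray, or a bi-infinite path, and on finite and one-way-infinite components a proper $2$-coloring is essentially forced by the graph (color the/an endpoint $0$), so the obstruction must live on bi-infinite components, where a $2$-coloring is a choice of one of the two ``parities'' on each component. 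Classically one makes this choice via a well-ordering, and \HYP{} even has a Borel well-ordering by Corollary~\ref{prop:wellordering}; the catch is that ``the $<_L$-least element of my component'' is, for a component with no endpoint, only a $\Pi_1$-over-$L_{\omega_1^{ck}}$ notion --- certifying that a vertex is \emph{not} least requires the $\Pi_1$ fact that a given competitor lies in the same component --- and this $\Sigma_1/\Pi_1$ asymmetry is exactly what I will exploit.

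The construction I would pursue has $G$ a disjoint union of bi-infinite paths, arranged so that even tracing a component is not $\Delta_1(L_{\omega_1^{ck}})$. If $G$ is only a $\Delta_1(L_{\omega_1^{ck}})$ \emph{relation}, then for a vertex $x$ the (at most two) neighbors of $x$ form a $\Delta_1$ set, but selecting ``the $<_L$-least neighbor'' --- the basic step of walking along a component --- is a priori only $\Pi_1$, so component-tracing, distance, and parity can all fail to be $\Delta_1$. I would build the edge relation so that this failure is genuine and, moreover, so that a $\Delta_1(L_{\omega_1^{ck}})$ proper $2$-coloring would force it to go away: encode a $\Sigma_1(L_{\omega_1^{ck}})$-complete set --- e.g.\ the set of codes for well-founded hyperarithmetic trees --- into the way the components are laid out, in such a way that a global parity assignment implicitly decides all its instances. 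Keeping each individual edge $\Delta_1(L_{\omega_1^{ck}})$ while the (countable) conjunction ``all these edges are present'' carries the $\Sigma_1$-complete information is the heart of the matter; this is the sort of ``spread out'' pathological Borel structure that the method of decorating trees of Section~\ref{sec:decorating} is designed to produce in \HYP, so that is the natural tool to apply here.

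Granting the construction, the verification is the expected contrapositive: from a $\Delta_1(L_{\omega_1^{ck}})$ proper $2$-coloring $c$ of $G$ one assembles, uniformly in $c$ and in an input real, a $\Delta_1(L_{\omega_1^{ck}})$ decision procedure for the encoded $\Sigma_1(L_{\omega_1^{ck}})$-complete set, contradicting that such sets are not $\Delta_1(L_{\omega_1^{ck}})$ (the analogue at $\omega_1^{ck}$ of the fact that $\Pi^1_1$-complete sets are not Borel). I expect the main obstacle to be precisely the balancing act just flagged: designing the components so that (i) $G$ is acyclic with maximum degree $2$, (ii) $G$ is $\Delta_1(L_{\omega_1^{ck}})$, yet (iii) no $\Delta_1(L_{\omega_1^{ck}})$ function can coherently orient the bi-infinite components. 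Reconciling (ii) and (iii) is delicate because, naively, a $\Delta_1$ maximum-degree-$2$ graph looks as though it should admit a $\Delta_1$ component-tracing and hence a $\Delta_1$ $2$-coloring; the construction must be arranged so that the relevant $\Pi_1$ facts (``this vertex is the least in its component'', ``this competitor lies in the same component'') genuinely fail to be $\Sigma_1$, and checking that a proper $2$-coloring cannot sidestep this by reading the answer off local data will be the crux.
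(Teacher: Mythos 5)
Your approach has a fatal structural flaw: the graph you propose --- a disjoint union of bi-infinite paths --- is a $2$-regular graph with no odd cycles, and Proposition~\ref{prop:d-regular-bipartite} shows that in \HYP{} \emph{every} Borel $d$-regular graph with no odd cycles has a Borel $2$-coloring. The $\Sigma_1/\Pi_1$ asymmetry you hope to exploit (``I can enumerate my component but never certify I have all of it'') disappears precisely when the graph is regular: once a vertex has exhibited $d$ neighbors you know you have seen them all, so by $\Sigma^1_1$-bounding (Lemma~\ref{lem:d-regular-component}) the entire component, together with all the evaluation maps needed to verify it, is computable from some $\emptyset^\beta$ with $\beta<\omega_1^{ck}$; the colorer then fixes a canonical enumeration of the component and colors by parity of distance to its first listed vertex. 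So no arrangement of bi-infinite paths can witness the proposition, however cleverly the components are laid out to encode a $\Sigma_1(L_{\omega_1^{ck}})$-complete set.

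Your preliminary reduction is also backwards: you argue that on finite components a proper $2$-coloring is ``essentially forced,'' so the obstruction must live on bi-infinite components. In \HYP{} it is the opposite. The paper's counterexample consists entirely of isolated vertices and finite paths on at most four vertices. The point is that a vertex of degree $<2$ carries no certificate that it will remain of degree $<2$: a purported Borel $2$-coloring, being hyperarithmetic, must commit to the colors of two fixed computable vertices $X_{\alpha,e,0},X_{\alpha,e,1}$ by some countable stage $\beta$ (namely, when $\emptyset^\beta$ computes the relevant evaluation maps), and the graph-builder, acting at stage $\beta$, then joins those two vertices by a fresh path of length $2$ or $3$ --- whichever parity is incompatible with the colors already assigned --- using connecting vertices Turing equivalent to $\emptyset^\beta$. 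This adaptive diagonalization against all candidate colorings $\phi_e^{\emptyset^\alpha}$ is the missing idea; ``color the endpoint $0$'' is unavailable to the colorer because the endpoints and distances of a finite component are only determined at a hyperarithmetic level chosen, after the fact, to exceed whatever the coloring uses. Beyond this, your own construction is explicitly left unfinished (you defer the ``crux''), so even setting aside the regularity obstruction there is no proof here.
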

\begin{proof}
Fix $\alpha^\ast \in \mathcal O^\ast \setminus \mathcal O$.
  For each $\alpha<_\ast \alpha^\ast$ and $e\in \omega$, we fix two distinct computable reals $X_{\alpha,e,0}$ and $X_{\alpha,e,1}$. 

  We can describe a computation in stages indexed by $\beta\in\mathcal{O}$.  At the stage $\beta$, we decide all edges between pairs of reals $(X,Y)$ such that $\beta$ is least so that both $X$ and $Y$ are $\emptyset^\beta$-computable.

  We consider those $\alpha\leq\beta$ and those $e$ so that $\phi_e^{\emptyset^\alpha}$ appears to be a Borel code for a Borel 2-coloring, and $\beta$ is least so that $\emptyset^\beta$ computes evaluation maps for the colors of both $X_{\alpha,e,0}$ and $X_{\alpha,e,1}$ in $\phi_e^{\emptyset^\alpha}$.  For each such pair $\alpha,e$ we choose either one or two fresh reals Turing equivalent to $\emptyset^\beta$, and we add edges to create a path between $X_{\alpha,e,0}$ and $X_{\alpha,e,1}$ of length $2$ or $3$ (whichever is incompatible with the colors given to $X_{\alpha,e,0}$ and $X_{\alpha,e,1}$).  We place no other edges.
\end{proof}


Given $k \in \omega$, recall that a {\it $k$-edge-coloring} of a graph $G = (V, E)$ is a function $c: E \to k$ with the property that no two adjacent edges are assigned the same color. Vizing's Theorem states that if the maximum degree of the vertices in $G$ is $k$, for some $k \in \omega$, then $G$ has an edge coloring with at most $k+1$ colors (see, e.g., \cite[Theorem 5.3.2]{Diestel}). In the special case when $G$ has no odd cycles (i.e., when $G$ is bipartite), K\"onig showed that $G$ has a $k$-edge coloring (see \cite[Proposition 5.3.1]{Diestel}). On the other hand, Marks has shown \cite{Marks16} that there are $n$-regular acyclic Borel graphs with a Borel bipartition which require as many as $2n-1$ colors for a Borel edge coloring.

\begin{prop}\label{prop:edge1}
In \HYP, for every $k \geq 3$, there is a Borel acyclic graph with vertices of maximum degree $k$ with no Borel $(k+1)$-edge-coloring. 
\end{prop}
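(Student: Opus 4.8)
The plan is to mimic, for edge colorings, the same choice-style diagonalization used in Proposition \ref{prop:wo} for vertex $2$-colorings, combined with an $\omega_1^{ck}$-computable search through all candidate Borel $(k+1)$-edge-colorings. By Theorem \ref{thm:main2}, it suffices to describe an $\alpha$-recursive algorithm (for $\alpha=\omega_1^{ck}$) that builds a graph $G=(V,E)$ with $V\subseteq 2^\omega$ Borel, $E$ Borel, $G$ acyclic, every vertex of degree $\leq k$, and such that for every hyperarithmetic Borel code which happens to code a $(k+1)$-edge-coloring of $G$, that coloring fails to be proper somewhere. The key classical fact to exploit is that for each $k\geq 3$ there is a finite acyclic ``gadget'' graph $G_k$ with maximum degree $k$ together with two designated ``boundary'' edges $a,b$ such that $G_k$ has a proper $(k+1)$-edge-coloring, but in \emph{every} proper $(k+1)$-edge-coloring of $G_k$ the edges $a$ and $b$ receive the same color (equivalently: the two possible color-pairs on $(a,b)$ are constrained). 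Actually the cleanest choice is the dual of the path gadget: take a gadget in which, for any prescribed assignment of colors to $a$ and $b$, exactly one of ``$a,b$ get the same color'' or ``$a,b$ get different colors'' is realizable. Such finite gadgets are standard in edge-coloring complexity arguments and exist for $k\geq 3$; I would cite or construct one explicitly, e.g.\ a suitably truncated version of the graphs underlying Marks' construction in \cite{Marks16}, or a small ad hoc Vizing gadget.

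First I would fix $\alpha^\ast\in\mathcal O^\ast\setminus\mathcal O$ and, for each $\alpha<_\ast\alpha^\ast$ and $e\in\omega$, reserve two computable reals $X_{\alpha,e,0},X_{\alpha,e,1}$ to serve as the endpoints of a freshly-built gadget copy. The computation proceeds in stages indexed by $\beta\in\mathcal O$: at stage $\beta$ I decide all edges among reals $(X,Y)$ for which $\beta$ is least with both $X,Y\leq_T\emptyset^\beta$. For each pair $(\alpha,e)$ with $\alpha\leq_\ast\beta$ such that $\phi_e^{\emptyset^\alpha}$ appears to be a Borel code for a $(k+1)$-edge-coloring and $\beta$ is least such that $\emptyset^\beta$ computes evaluation maps deciding the colors of $a$ and $b$ in a partially-built gadget for $(\alpha,e)$, I introduce finitely many fresh reals Turing-equivalent to $\emptyset^\beta$ and wire up a copy of the gadget $G_k$ whose boundary edges are forced by the coloring's values on $a,b$ to the \emph{unrealizable} side — so the alleged coloring $\phi_e^{\emptyset^\alpha}$ cannot in fact be a proper $(k+1)$-edge-coloring of the full graph. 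All fresh reals are chosen distinct from everything used so far, and no edges are placed except those inside the chosen gadget copies, which keeps $G$ acyclic and of maximum degree $\leq k$ (the gadget is chosen so its boundary vertices have degree $\leq k-1$ internally, leaving room for the wiring). The graph and edge relation are $\omega_1^{ck}$-computable, hence Borel in \HYP{} by Theorem \ref{thm:main2}.

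The verification then has two halves, exactly parallel to Proposition \ref{prop:wo}. If some hyperarithmetic $\phi_e^{\emptyset^\alpha}$ really were a Borel $(k+1)$-edge-coloring of $G$, then $\alpha\in\mathcal O$ and the relevant evaluation maps are computed by some $\emptyset^\beta$ with $\beta\in\mathcal O$, so the stage-$\beta$ action fires and installs a gadget forcing a conflict — contradiction. Conversely I must check $G$ genuinely has maximum degree $\leq k$, is acyclic, and $V,E$ are legitimate Borel sets; this is routine bookkeeping since each real participates in at most one gadget and gadgets share no vertices. The main obstacle is pinning down the right finite gadget $G_k$: I need, for every $k\geq 3$, an acyclic degree-$\leq k$ graph with two accessible boundary edges whose admissible boundary-color-relations under proper $(k+1)$-edge-colorings are a \emph{proper} nonempty subset of all relations, so that an adversary's choice can always be diagonalized against. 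For $k\geq 3$ one can take, for instance, a single vertex of degree $k$ with two of its incident edges designated as $a,b$: in any proper $(k+1)$-edge-coloring the $k$ edges at that vertex get $k$ distinct colors, so $a$ and $b$ necessarily get \emph{different} colors — hence attaching a path that forces $a,b$ to want the same color produces the contradiction, and the gadget is just a vertex plus a short path, trivially acyclic and of the right degree. I would present that explicit gadget and note the degree-$2$ failure of this trick (which is why the hypothesis is $k\geq 3$), then carry out the stage construction above.
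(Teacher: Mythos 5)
Your recursion-theoretic scaffolding is exactly the paper's: stages indexed by $\beta\in\mathcal O$, fresh reals Turing-equivalent to $\emptyset^\beta$, firing when $\emptyset^\beta$ computes the evaluation maps for the colors of the pre-built edges, and an appeal to Theorem \ref{thm:main2}. The gap is in the finite combinatorics, which you correctly identify as the crux but then get wrong. Your concrete gadget --- a star with $k$ edges at a center, so that $a$ and $b$ receive different colors, followed by ``attaching a path that forces $a,b$ to want the same color'' --- does not work. In a forest, no attached path (or any attached tree of maximum degree $\le k$) can force two already-colored, non-adjacent edges to receive the same color under a $(k+1)$-edge-coloring: each new edge along the path is adjacent to at most two previously colored edges, so it always has at least $k-1\ge 2$ legal colors, and a greedy extension from one end succeeds no matter what colors $a$ and $b$ carry. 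More generally, your framing in terms of ``admissible boundary-color-relations being a proper subset of all relations'' is not the right condition; what you actually need is that after the adversary commits to a proper partial coloring of the pre-built part, you can attach new vertices and edges so that the committed partial coloring has \emph{no} proper extension to the new edges (you never get to see the colors of the edges you add, so the obstruction must defeat every assignment to them). A single star plus a path never creates such an obstruction: one can check via Hall's theorem that a fresh vertex joined to $k$ leaves of the star, or to endpoints of arbitrary previously colored edges with distinct colors, still admits a system of distinct representatives for its $k$ new edges.

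The paper's proof supplies the missing idea with a pigeonhole argument. Pre-build $N=\binom{k+1}{2}(k-1)+1$ disjoint paths of length two for each pair $(\alpha,e)$. Once the adversary's colors on these $2N$ edges are revealed, each path determines an unordered pair of colors, and by choice of $N$ some pair $\{c_1,c_2\}$ is realized by at least $k$ of the paths. Join the centers of $k$ such paths to one fresh vertex $X_{\alpha,e}$. The $k$ new edges at $X_{\alpha,e}$ are mutually adjacent, so they would need $k$ distinct colors, but each is adjacent to two old edges colored $c_1$ and $c_2$, leaving only $k-1$ available colors; Hall's condition fails and no extension exists. The degrees stay bounded by $k$ (centers go from degree $2$ to $3\le k$) and the graph remains acyclic. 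Without this pigeonhole step --- choosing \emph{which} centers to wire up so that all $k$ new edges face the \emph{same} two forbidden colors --- the diagonalization cannot be completed, so as written your proof does not go through.
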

\begin{proof}
Let $N = \binom{k+1}{2}(k-1)+1$. (We have chosen $N$ so that when $N$ graphs are put into $\binom{k+1}{2}$ categories, some category contains at least $k$ graphs.) 
Fix $\alpha^\ast \in \mathcal O^\ast\setminus \mathcal O$.  For each $\alpha <_\ast \alpha^\ast$ and $e \in \omega$, we choose distinct computable reals $C_{\alpha, e}^1, \dots, C_{\alpha, e}^N$, $V_{\alpha, e}^1, \dots, V_{\alpha, e}^N$, and $W_{\alpha, e}^1, \dots, W_{\alpha, e}^N$. 

As in the proof of Proposition $\ref{prop:wo}$, we build a graph in stages $\beta \in \mathcal{O}$ so that at stage $\beta$, we determine all edges between pairs of reals $(X, Y)$, where $\beta$ is the smallest so that $\emptyset^\beta$ computes both $X$ and $Y$. 

At stage $\beta = 0$, for every $\alpha <_\ast \alpha^\ast$ and $e \in \omega$, and for $1 \leq i \leq N$, we connect $V_{\alpha, e}^i$ and $C_{\alpha, e}^i$ with an edge, and we connect $W_{\alpha, e}^i$ and $C_{\alpha, e}^i$ with an edge. Hence, for each $\alpha <_\ast\alpha^\ast$ and $e \in \omega$, we have $N$ disjoint paths of length two, each with a central `$C$' vertex and leaf vertices `$V$' and `$W$'. We will refer to this collection of $N$ paths as the {\it $(\alpha, e)$ computable subgraph}. 

At stage $\beta>0$, we handle all pairs $(\alpha, e)$, where $\alpha < \beta$ and $e \in \omega$, such that $\phi_e^{\emptyset^\alpha}$ appears to be a Borel code for a $(k+1)$-edge-coloring, and $\beta$ is the first ordinal after $\alpha$ so that $\emptyset^\beta$ computes evaluation maps for the color of every edge in the $(\alpha, e)$ computable subgraph. Given such a pair $(\alpha, e)$, we select a fresh vertex $X_{\alpha, e}$ that is Turing equivalent to $\emptyset^\beta$. We then find $k$ paths of length two in the $(\alpha, e)$ computable subgraph that all use the same two colors. For each of these paths, we connect the central `$C$' vertex to the new vertex $X_{\alpha, e}$. The given $(k+1)$-edge-coloring of the $(\alpha, e)$ computable subgraph cannot be extended to a $(k+1)$-edge-coloring of the extended graph, for $X_{\alpha, e}$ has degree $k$, and there are only $k-1$ colors available for its edges. 
\end{proof}


In Propositions $\ref{prop:wo}$ and $\ref{prop:edge1}$, the graph-builder has a source of power because the
graph-colorer is not able to wait to see all the neighbors of a given vertex.
If we restrict attention to connected graphs or to $d$-regular graphs, the graph-colorer may now have the upper hand.

\begin{prop}
  In \HYP, every connected Borel graph with no odd cycles has a Borel 2-coloring.
\end{prop}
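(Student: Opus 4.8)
The plan is to use Theorem~\ref{thm:main2}: it suffices to exhibit an $\omega_1^{ck}$-recursive algorithm which, given a Borel code for a connected graph $G = (V,E)$ with no odd cycles (and the relevant evaluation maps), computes a $2$-coloring code. The key point is that in a connected graph, the $2$-coloring is determined up to a global swap once we fix the color of a single vertex: the color of any vertex $Y$ is forced by the parity of the length of any path from a fixed basepoint $X_0$ to $Y$. So the algorithm should first identify a canonical basepoint, then color each vertex by computing a path to it.

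Here are the steps I would carry out, in order. First, using the Borel well-ordering of the reals from Corollary~\ref{prop:wellordering} (which is itself $\omega_1^{ck}$-computable), let $X_0$ be the $<$-least element of $V$; declare $c(X_0) = 0$. Second, for an arbitrary $Y \in V$, search (as part of the $\omega_1^{ck}$-computation) for a finite path $X_0 = Z_0, Z_1, \dots, Z_n = Y$ in $G$; such a path exists by connectedness, and the search is over $V^{<\omega}$, with edge-membership decidable using the Borel code for $E$ and its evaluation maps. Set $c(Y) = n \bmod 2$. Third, verify well-definedness: if there were two paths from $X_0$ to $Y$ of different parities, concatenating one with the reverse of the other would yield a closed walk of odd length, hence (by a standard argument reducing a closed odd walk to an odd cycle) an odd cycle in $G$, contradicting the hypothesis. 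Fourth, check that $c$ is a proper coloring: if $Y, Y'$ are adjacent, a path of length $n$ to $Y$ extends to one of length $n+1$ to $Y'$, so $c(Y) \neq c(Y')$. Finally, package all of this — the search for $X_0$, the search for paths, the parity computation — as a single $\omega_1^{ck}$-recursive procedure, and invoke Theorem~\ref{thm:main2} to conclude that the resulting $2$-coloring has a completely determined Borel code in $\HYP$.

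The main obstacle is making precise that the whole procedure is genuinely $\alpha$-recursive for a suitable $\alpha < \omega_1^{ck}$, uniformly in the input code. The path search is unbounded over $V^{<\omega}$, but each candidate path is finite and checking it requires only finitely many queries to the edge-relation's evaluation maps; the subtlety is that deciding $(Z_i, Z_{i+1}) \in E$ may require evaluation maps at reals of unbounded hyperarithmetic complexity, so one must argue that for a fixed hyperarithmetic graph all the needed evaluation data is available below some single ordinal notation, exactly as in the bookkeeping of Propositions~\ref{prop:wo} and~\ref{prop:edge1}. Once that uniformity is in hand, the combinatorial core — that connectedness plus no odd cycles forces a well-defined parity coloring — is routine and the appeal to Theorem~\ref{thm:main2} is immediate.
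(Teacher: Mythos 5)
Your proposal is correct and takes essentially the same approach as the paper: fix a basepoint, color each vertex by the parity of the first path found to it (at the least stage $\beta$ such that the path and the evaluation maps witnessing its edges are all computable from $\emptyset^\beta$), with well-definedness coming from the absence of odd cycles. One small correction to your closing paragraph: you neither need nor can in general obtain a single ordinal notation below which \emph{all} the evaluation data for the whole graph is available; the stage-based bookkeeping colors each vertex at whatever countable stage its own path data first appears, and that suffices for the resulting coloring to be $\Delta_1(L_{\omega_1^{ck}})$ and hence, by Theorem~\ref{thm:main2}, Borel in \HYP.
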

\begin{proof}
  Let $E$ be a Borel code for the edges of the graph.
  
  Fix a real $X_0$.  At stage $\beta$ of our computation, we consider those $X$ such that $\beta$ is least so that there exist $X_0,\ldots,X_n\leq_T\emptyset^\beta$ with $X_n=X$ and evaluation maps $g_0,\ldots,g_{n-1}\leq_T\emptyset^\beta$ witnessing that $(X_i,X_{i+1})\in|E|$ for all $i<n$.

  We color $X$ by taking the first such path and coloring $X$ with $0$ if and only if $n$ is even.  Since the graph is assumed to be connected, each $X$ is colored at some stage $\beta$.  Since the graph has no odd cycles, this is a well-defined 2-coloring.
\end{proof}

For the rest of this section, $d\geq 1$ is any natural number. 

\begin{lemma}\label{lem:d-regular-component}
Suppose $G$ is a Borel $d$-regular graph in \HYP.
Then for every 
$X \in V(G)$, there is a computable ordinal $\beta$ such that 
$\emptyset^{\beta}$ computes an enumeration of the 
connected component of $X$ together with all evaluation maps 
needed to verify the component.\end{lemma}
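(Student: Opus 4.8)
The plan is to build the component of $X$ by a bounded search that stays inside \HYP{} at every stage. First I would observe that since $G$ is Borel in \HYP{} and $G$ is $d$-regular, the edge relation $E$ is coded by some $\phi_e^{\emptyset^\delta}$ for a fixed $\delta \in \mathcal O$, so everything below will be relative to that fixed parameter. The key point is that in a $d$-regular graph, verifying the component requires only finitely much information per vertex: each vertex $Y$ in the component has exactly $d$ neighbors, and ``$Z$ is the $i$th neighbor of $Y$'' is witnessed by an evaluation map showing $(Y,Z) \in |E|$, while ``$Y$ has no neighbor other than these $d$ vertices'' is a statement whose witnesses are, for each other candidate $Z$, an evaluation map showing $(Y,Z)\notin |E|$. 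This is the content of the phrase ``all evaluation maps needed to verify the component''.

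The construction proceeds in stages indexed by computable ordinals, exactly as in the earlier propositions. At stage $\beta$ we look at all reals $Y$ such that $\beta$ is least with the following property: there is a finite path $X = Y_0, Y_1, \dots, Y_n = Y$ from $X$ to $Y$, together with evaluation maps $\leq_T \emptyset^\beta$ witnessing $(Y_i, Y_{i+1}) \in |E|$ for each $i < n$, and moreover $\emptyset^\beta$ computes, for each $Y_j$ along the path, evaluation maps confirming that $Y_j$'s full neighbor set has been identified (so that the search for new vertices is complete at $Y_j$). We add each such $Y$ to the enumeration at stage $\beta$ and record the relevant evaluation maps. Since $d$-regularity bounds the branching, the connected component of $X$ is a countable, hyperarithmetically presented graph, and the whole process is an $\omega_1^{ck}$-recursive computation: each vertex enters at some computable stage, because each vertex is hyperarithmetic and the finitely many evaluation maps certifying its neighbors are hyperarithmetic, hence all below some $\emptyset^\beta$ with $\beta \in \mathcal O$.

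The final step is to check that the least such $\beta$ is itself a computable ordinal, i.e.\ an element of $\mathcal O$ and not merely $\mathcal O^\ast$. This follows because the entire component of $X$, together with its certifying evaluation maps, forms a single hyperarithmetic object $C$; let $\beta$ be least such that $C \leq_T \emptyset^\beta$. Then $\beta \in \mathcal O$ since $C$ is hyperarithmetic, and $\emptyset^\beta$ computes the enumeration of the component and all needed evaluation maps, which is exactly the statement of the lemma. The main obstacle is bookkeeping: one must be careful that ``verifying the component'' is stated so that only finitely many evaluation maps are attached to each vertex — this is where $d$-regularity is essential, and it is what makes the component a genuinely hyperarithmetic object rather than merely a $\Pi^1_1$ set — and that the stage assignment is monotone enough that the search at stage $\beta$ only consults $\emptyset^\beta$. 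Neither difficulty is serious; the argument is a routine variant of the staged constructions already used above, now run in the ``colorer-friendly'' direction where $d$-regularity lets us close off each vertex's neighborhood.
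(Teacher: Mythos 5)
Your construction has the right general shape (stage-by-stage search, finite balls of each radius thanks to $d$-regularity), but the crucial step is asserted rather than proved. You write that the least $\beta$ works ``because the entire component of $X$, together with its certifying evaluation maps, forms a single hyperarithmetic object $C$.'' That claim \emph{is} the content of the lemma, and it is not automatic: a countable set of hyperarithmetic reals need not admit any hyperarithmetic enumeration. (For instance, $\{H_a : a \in \mathcal O\}$ is a countable collection of hyperarithmetic reals that is cofinal in the hyperdegrees, so no single $\emptyset^\beta$ with $\beta \in \mathcal O$ computes an enumeration of it.) The danger here is precisely that the least stages $\beta_k$ at which the radius-$k$ balls around $X$ become computable (with witnesses) might be unbounded in $\omega_1^{ck}$, in which case the component would have no hyperarithmetic presentation at all. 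The paper closes exactly this gap: because each ball of radius $k$ is \emph{finite} (this is where $d$-regularity enters), the least notation $\beta_k$ such that $H_{\beta_k}$ computes that ball and its finitely many edge-witnesses can be recognized in a $\Sigma^1_1$ way, and then $\Sigma^1_1$-bounding yields a single computable $\beta$ above all the $\beta_k$. Your proof needs this (or an equivalent boundedness argument) to go through; without it the final step is circular.

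A secondary issue: you propose to certify that a vertex's neighbor set is complete by attaching, for every non-neighbor $Z$, an evaluation map witnessing $(Y,Z)\notin|E|$. That is one witness per hyperarithmetic real $Z$, i.e.\ an unbounded family, which contradicts your own (correct) remark that only finitely many evaluation maps should be attached to each vertex and would reintroduce the unboundedness problem. The negative witnesses are unnecessary: since $G$ is assumed $d$-regular and evaluation maps are unique, once you have found $d$ neighbors with positive witnesses you already know you have found them all. Only the $d$ positive edge-witnesses per vertex are needed.
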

\begin{proof}
  Observe that for each $X$, there are exactly $d$ neighbors, each hyperarithmetic, and, for each neighbor, a single evaluation map is needed to verify the edge, which is also hyperarithmetic.  So there is a unique least computable ordinal $\beta$ large enough 
  that $\emptyset^\beta$ computes $X$, all $d$ neighbors, and all $d$ evaluation maps witnessing the edges.
Similarly, for each distance $k$, there is a least $\beta$ such that $\emptyset^\beta$
 computes everything needed to enumerate and verify the set of vertices at distance at most $k$ from $X$.  Here is where it is used that 
$G$ is $d$-regular: for each $k$ this least $\beta$ can be recognized in a $\Sigma^1_1$ way.  Thus by $\Sigma^1_1$-bounding, there is some $\beta \in \mathcal O$ such that 
$\emptyset^\beta$ computes all vertices and edge-witnesses of the connected component 
of $X$.  With another couple of jumps, these vertices and witnesses can be 
enumerated in an organized way.  \end{proof}

\begin{prop}\label{prop:d-regular-bipartite}
  In \HYP, every Borel $d$-regular graph with no odd cycles has a Borel 2-coloring.
\end{prop}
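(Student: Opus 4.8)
The plan is to describe an $\omega_1^{ck}$-recursive algorithm which, given a vertex $X$ of the graph, outputs a color $c(X)\in\{0,1\}$; by Theorem \ref{thm:main2} the set $\{X : c(X)=0\}$ then has a completely determined Borel code in \HYP, and this code (intersected with $V(G)$) is the desired Borel $2$-coloring. The difficulty, compared with the connected case, is that a disconnected bipartite graph has no canonical $2$-coloring: within each connected component the bipartition is unique up to swapping the two colors, but there is no \emph{a priori} coherent way to choose which side is which across all components at once. This is exactly the kind of situation handled by the Borel well-ordering of \HYP, and the algorithm below is in effect choosing a canonical base vertex inside each component.

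First I would fix, for each connected component $C$ of $G$, a canonical base point. By Lemma \ref{lem:d-regular-component}, for every $X\in C$ there is a computable ordinal $\beta$ such that $\emptyset^\beta$ computes an enumeration of $C$ together with all evaluation maps witnessing the edges with both endpoints in $C$. Since $C$ as a vertex set, together with its induced edge relation, does not depend on which $X\in C$ one starts from, we may let $\beta_C$ be the \emph{least} computable ordinal with this property; this $\beta_C$ is a genuine countable ordinal depending only on $C$. From $\emptyset^{\beta_C}$ we then extract the canonical such enumeration $v_0^C,v_1^C,\dots$ of $C$ (say, the one given by the least program index that works) and declare $X_0^C:=v_0^C$ to be the base point of $C$.

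On input $X$, the algorithm then proceeds as follows: use Lemma \ref{lem:d-regular-component} to compute the component $C$ of $X$ (as a set, with edge-witnesses); search for $\beta_C$ and extract $X_0^C$ as above; and run breadth-first search from $X_0^C$ inside $C$. Because $G$ is $d$-regular it is locally finite, so each BFS layer is finite, and the layers are computable from $\emptyset^{\beta_C+O(1)}$ (one extra jump suffices to decide the real equalities needed to deduplicate vertices across layers; one never needs to confirm a non-edge, since each vertex has exactly $d$ neighbors, all witnessed). Let $d_C(X_0^C,X)$ be the index of the first layer containing $X$ (finite, since $C$ is connected) and set $c(X):=d_C(X_0^C,X)\bmod 2$. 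Every step after ``compute the component of $X$'' depends only on $C$, so $c$ is well-defined on $V(G)$; and if $X,Y$ are adjacent, they lie in a common component $C$, and $d_C(X_0^C,X)$ and $d_C(X_0^C,Y)$ differ by exactly $1$ because $G$ has no odd cycles, so $c(X)\neq c(Y)$. The whole procedure is $\omega_1^{ck}$-recursive: the only unbounded search is for $\beta_C$, which terminates at a genuine ordinal below $\omega_1^{ck}$ by Lemma \ref{lem:d-regular-component}. Hence by Theorem \ref{thm:main2} the coloring is Borel in \HYP.

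The only point that I expect to require care is the claim that $\beta_C$, the canonical enumeration of $C$, and hence the base point $X_0^C$ depend only on $C$ and not on the entry vertex $X$. Once the ``least $\beta$'' condition is phrased purely in terms of the fixed set $C$ and the fixed Borel code for $E$ (rather than, for instance, in terms of distances measured from $X$), this is immediate, and what remains is the routine verification that a breadth-first computation inside a locally finite, connected, bipartite graph can be carried out $\omega_1^{ck}$-recursively using the data supplied by Lemma \ref{lem:d-regular-component}.
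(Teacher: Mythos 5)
Your proposal is correct and follows essentially the same route as the paper's proof: find the least ordinal $\beta$ such that $\emptyset^\beta$ enumerates and verifies the connected component, take the canonical (least-index) enumeration, and color by parity of distance to its first-listed vertex. Your extra care in checking that the base point depends only on the component and not on the entry vertex is a worthwhile elaboration of a step the paper leaves implicit, but it is not a different argument.
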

\begin{proof}
  Each real in $X$ has a countable connected component in the given Borel graph.  
  In particular, if we are given a set $Y$ whose columns consist of all the path-neighbors of $X$ together with all the evaluation maps needed to verify them, we can verify in a hyperarithmetic way that it really is the entire connected component.
  By Lemma \ref{lem:d-regular-component}, if we search for such $Y$, we will find one.

  At stage $\beta$, we will color those $X$ such that $\beta$ is least so that $\emptyset^\beta$ computes an enumeration of the connected component of $X$ together with all evaluation maps needed to verify the component.

  When we find such an enumeration, we choose the one whose index (that is, the $e$ such that $\phi_e^{\emptyset^\beta}$ is the desired enumeration) is least, and color each $X$ in the component based on whether it has even distance to the vertex listed first in $\phi_e^{\emptyset^\beta}$.  Since the graph has no odd cycles, this is a well-defined 
  2-coloring.
\end{proof}

\begin{prop}\label{prop:edge2}
In \HYP, every Borel $d$-regular graph has a Borel $(d+1)$-edge-coloring. 
\end{prop}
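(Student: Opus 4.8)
The plan is to exhibit an $\omega_1^{ck}$-recursive algorithm which, on input an edge $(X,Y)$ of the given Borel $d$-regular graph $G$, outputs its color; by the discussion following Theorem \ref{thm:main2}, each color class is then a completely determined Borel set in \HYP, so the resulting $(d+1)$-edge-coloring is Borel there. As in Propositions \ref{prop:d-regular-bipartite} and \ref{prop:edge1}, the computation runs in stages indexed by $\beta \in \mathcal O$: by Lemma \ref{lem:d-regular-component}, for each vertex $X$ there is a least computable $\beta$ such that $\emptyset^\beta$ computes an enumeration of the connected component of $X$ together with all evaluation maps needed to verify it, and this $\beta$ depends only on the component, not on $X$. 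At stage $\beta$ we color all edges of each component assigned to stage $\beta$, choosing, as in Proposition \ref{prop:d-regular-bipartite}, the enumeration of least index to make the choice canonical.

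It then remains to color a single component $C = \{v_0, v_1, \dots\}$, given an enumeration and edge-witnesses computable from $\emptyset^\beta$, using only boundedly more jumps. Let $G_n$ be the finite subgraph on $\{v_0,\dots,v_n\}$ together with those edges of $G$ both of whose endpoints lie in this set; since $G$ is $d$-regular, $G_n$ has maximum degree at most $d$, so by Vizing's theorem it admits a $(d+1)$-edge-coloring. The restriction of any $(d+1)$-edge-coloring of $G_{n+1}$ to $G_n$ is again a $(d+1)$-edge-coloring, so the $(d+1)$-edge-colorings of the $G_n$, ordered by restriction, form a tree $\mathcal T$. This tree is computable from $\emptyset^\beta$ (the $G_n$ are uniformly $\emptyset^\beta$-computable from the enumeration and edge-witnesses, and propriety of a coloring is decidable), it is infinite by Vizing, and it is finitely branching since passing from $G_n$ to $G_{n+1}$ adds a single vertex of degree at most $d$. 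Hence $\emptyset^{\beta + O(1)}$ computes its leftmost path, whose union is a $(d+1)$-edge-coloring of all of $C$; reading off the color of $(v_i, v_j)$ at level $\max(i,j)$ of this path gives the value of the algorithm on that edge.

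Finally, distinct components of $G$ share no vertex, hence no edge of one component is adjacent to an edge of another, so the per-component colorings assemble into a proper $(d+1)$-edge-coloring of $G$, and the algorithm above computes it $\omega_1^{ck}$-recursively. I expect the point requiring the most care to be the within-component step: one must check that $\mathcal T$ is genuinely finitely branching (so that its leftmost path is found with a single jump of $\emptyset^\beta$ rather than some unbounded amount of power) and infinite (so that König's lemma applies), both of which reduce to Vizing's theorem for \emph{finite} graphs — no infinitary version of Vizing is needed, since the compactness argument supplies it. Everything else is bookkeeping analogous to the preceding propositions.
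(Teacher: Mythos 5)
Your proposal is correct and follows essentially the same route as the paper's proof: stage the computation by the least $\beta$ such that $\emptyset^\beta$ enumerates and verifies a component, fix the least-index enumeration, build the $\emptyset^\beta$-computable, finitely branching tree of $(d+1)$-edge-colorings of the finite induced subgraphs (infinite by Vizing's theorem for finite graphs), and take the leftmost path with one more jump. The points you flag as needing care (finite branching, infiniteness via finite Vizing, closure of colorings under restriction) are exactly the ones the paper relies on.
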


\begin{proof}
Suppose $E$ is a Borel $d$-regular graph in \HYP. At stage $\beta$, we consider the connected components of $E$ for which $\beta$ is the least ordinal such that $\emptyset^\beta$ computes an enumeration $Y$ of the vertices in the component, 
together with all evaluation maps needed to verify the edges. (By Lemma $\ref{lem:d-regular-component}$, every connected component of $E$ will be handled at some stage $\beta$.) Given such a connected component $C$, we pick the least such 
enumeration $Y$  (the one given 
by the least $e$ such that the columns of $Y=\phi_e^{\emptyset^\beta}$ enumerate the 
component with all supporting evaluation maps). We use the ordering of the vertices of $C$ given by $Y$ to obtain a $\emptyset^\beta$-computable $(d+1)$-branching tree $T$, whose nodes represent partial $(d+1)$-edge-colorings of $C$. By Vizing's Theorem (see \cite[Theorem 5.3.2]{Diestel}), every finite induced subgraph 
of the component has a $(d+1)$-edge-coloring, so $T$ is infinite.
Therefore, by compactness,
$T$ has an infinite path. We use the left-most path (computable in $\emptyset^{\beta+1}$) to assign colors to the edges in $C$. 
\end{proof}

We finish out this section by showing that Marks' theorem for perfect matchings fails in \HYP.
Recall that given a graph $G$, a perfect matching is a subset $P \subseteq E(G)$ 
such that every vertex in the graph is an endpoint of exactly one edge from $P$.
Classically, a graph is bipartite if and only if it has no odd cycles.
A Borel bipartite graph is a Borel graph which has Borel 2-coloring to witness that it
has no odd cycles.

We need the following well-known fact, concerning the existence
of partial perfect matchings, but did not find a convenient reference,
so we also give a proof.  
\begin{lemma}\label{lem:induced-matching} 
If $G$ is any finite bipartite graph whose vertices have degree
at most $d$, there is some $E_0 \subseteq E(G)$ such that each 
vertex is an endpoint of at most one edge in $E_0$, and
each vertex of degree $d$ is an endpoint of exactly one edge in $E_0$.
\end{lemma}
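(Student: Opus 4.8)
The plan is to deduce this immediately from K\"onig's edge-coloring theorem. First I would invoke \cite[Proposition 5.3.1]{Diestel} in the form ``a bipartite graph of maximum degree $\Delta$ has chromatic index $\Delta$'': since $G$ is finite, bipartite, and has maximum degree at most $d$, there is a proper edge-coloring $c\colon E(G)\to\{1,\dots,d\}$, meaning any two edges sharing an endpoint receive different colors. For each color $i$, set $E_i = c^{-1}(i)$. By properness each $E_i$ is a matching, so every vertex of $G$ is an endpoint of at most one edge of $E_i$.

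Next I would observe that the color classes take care of the degree-$d$ vertices automatically. If $v$ has degree exactly $d$, then its $d$ incident edges receive $d$ pairwise distinct colors drawn from a palette of size $d$, so every one of the $d$ colors occurs exactly once at $v$; in particular $v$ is an endpoint of exactly one edge in each $E_i$. Taking $E_0 := E_1$ (any fixed color class) then gives the conclusion: at most one incident edge at every vertex, and exactly one incident edge at every vertex of degree $d$.

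There is no real obstacle here; the only point deserving emphasis — and it is minor — is that $G$ is not assumed to be regular, so one must apply K\"onig's theorem via the maximum degree rather than via a regularity hypothesis. If one preferred not to cite the edge-coloring version, an alternative route would be to pad $G$ to a $d$-regular bipartite (multi)graph $G'$ by adjoining a mirror copy and joining each $v$ to its mirror $v^\ast$ with multiplicity $d-\deg_G(v)$, extract a perfect matching of $G'$ from Hall's theorem (which holds since every $d$-regular bipartite graph satisfies Hall's condition), and restrict that matching to $E(G)$. But the edge-coloring argument is shorter and avoids the multigraph bookkeeping, so that is the one I would write up.
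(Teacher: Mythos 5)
Your proof is correct, but it takes a genuinely different route from the paper's. The paper proves the lemma by padding: it shows that any finite bipartite graph of maximum degree at most $d$ embeds as an induced subgraph of a finite $d$-regular bipartite graph (adding fresh vertex sets $A_1,B_1$ and carefully distributing new edges), and then invokes the fact that every finite $d$-regular bipartite graph has a perfect matching \cite[Corollary 2.1.3]{Diestel}; restricting that matching to $E(G)$ gives $E_0$, since a degree-$d$ vertex of $G$ acquires no new edges in the regular supergraph. You instead invoke K\"onig's edge-coloring theorem \cite[Proposition 5.3.1]{Diestel} --- which the paper already cites in its discussion of edge colorings --- properly color $E(G)$ with $d$ colors, and take a single color class; the observation that a degree-$d$ vertex sees all $d$ colors exactly once is exactly what makes each class hit every such vertex. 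Both arguments are sound and both lean on standard results quoted elsewhere in the paper. Yours is shorter and avoids the bookkeeping of the padding construction (checking $|A_0|=|B_0|$, counting $d|A_0|-|E(G)|$, pairing off leftover vertices); the paper's has the mild advantage of staying entirely within the perfect-matching toolkit that the lemma is subsequently used for, and of producing an explicit regular supergraph. Your sketched alternative via a regular multigraph is essentially the paper's idea, except the paper spends the extra effort to stay in the category of simple graphs so that the citation to the simple-graph matching theorem applies verbatim.
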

\begin{proof}
Every finite $d$-regular bipartite graph has a perfect matching (see e.g. \cite[Corollary 2.1.3]{Diestel}).  So it suffices to show
that whenever $G$ satisfies the hypotheses of the lemma, then $G$
is an induced subgraph of some finite $d$-regular bipartite graph.  Let 
$V(G)=A_0 \cup B_0$ where $A_0$ and
$B_0$ witness that $G$ is bipartite.  By adding extra vertices to $G$
if necessary, we may assume without loss of generality that $|A_0|=|B_0|$.
If $G$ is already $d$-regular, we are done.  If $G$ is not $d$-regular,
we see that $|E(G)| < d|A_0|$.
Let $A_1$ and $B_1$ be new sets which each contain $k$ fresh vertices,
where $k\geq \max\{d|A_0| - |E(G)|, d\}$.  For each
vertex in $A_0$ which has fewer than $d$ neighbors,
connect it to some vertices in $B_1$ in order to bring its number 
of neighbors up to $d$.  Since $B_1$ contains enough vertices, this 
can be done in such a way that each vertex of $B_1$ receives 
at most one edge.  Similarly, add edges between $B_0$ 
and $A_1$ in order to bring the degree of each vertex in $B_0$ 
up to $d$ while adding at most one edge to each vertex of $A_1$.
Now exactly $d|A_0|-|E(G)|$ vertices in each of $A_1$ and $B_1$
 have an edge.  Add exactly one edge to each of the remaining 
 vertices of $A_1$ and $B_1$ by connecting them in pairs.  
 The problem is reduced to finding a $(d-1)$-regular 
 graph on the bipartition $\{A_1,B_1\}$ which does not use any
 of the existing edges between $A_1$ and $B_1$.  Since 
 $|A_1|=k>d-1$, such a graph exists.
\end{proof}

Now we can see the true situation with Borel perfect matchings  
differs from the situation in \HYP.

\begin{theorem}[Marks \cite{Marks16}]
For every $d>1$, there exists a Borel $d$-regular graph with no odd cycles which
has no Borel perfect matching.
Furthermore, this graph can be chosen to be acyclic and Borel bipartite.
\end{theorem}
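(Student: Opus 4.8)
The plan is to follow Marks' Borel-determinacy argument from \cite{Marks16} --- since the statement \emph{is} Marks' theorem one could also simply cite it, but the sketch is worth recording, if only to see that the bipartite strengthening costs essentially nothing. First I would fix the group $\Gamma$ that is the free product of $d$ copies of $\mathbb Z/2$, with order-two free generators $\gamma_1,\dots,\gamma_d$, so that the Cayley graph of $\Gamma$ is the $d$-regular tree. Let $Y$ be a standard Borel space carrying a free Borel action of $\Gamma$ (for instance the free part of the shift action of $\Gamma$ on $2^{\Gamma}$), and set $X = Y \times \mathbb Z/2$ with $\gamma_i \cdot (y,\epsilon) = (\gamma_i \cdot y,\, 1-\epsilon)$. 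Let $G$ be the Schreier graph on $X$: $v \sim \gamma_i \cdot v$ for $1 \le i \le d$. Because the action is free and $\Gamma$'s Cayley graph is a tree, $G$ is $d$-regular and acyclic --- a cycle would produce a nontrivial reduced word of $\Gamma$ fixing a point, and any reduction of that word corresponds to a backtrack in the cycle, a contradiction. Because each generator exchanges $Y \times \{0\}$ and $Y \times \{1\}$, the projection $(y,\epsilon) \mapsto \epsilon$ is a Borel proper $2$-coloring, so $G$ is Borel bipartite. Transporting $X$ into $2^{\omega}$ along a Borel injection puts $G$ into the form demanded by our framework.

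Now suppose toward a contradiction that $M \subseteq E(G)$ is a Borel perfect matching. Then there is a Borel function $c \colon X \to \{1,\dots,d\}$ recording, at each vertex $v$, the generator along which $v$ is matched, and the matching property becomes the Borel identity $c(v) = i \iff c(\gamma_i \cdot v) = i$. To rule this out one invokes Marks' games: for suitable finite data (an ordered pair $i \ne j$ of generator indices will do, which is where $d \ge 2$ is used) one defines a two-player perfect-information game in which the players alternately extend a non-backtracking walk $v, \gamma_{a_1}\cdot v, \gamma_{a_2}\gamma_{a_1}\cdot v, \dots$ through the tree, with a Borel payoff set defined from $M$ so that, informally, one player is trying to steer the walk into a matched $i$-edge and the other to keep it out. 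By Borel determinacy (Martin), one of the two players has a winning strategy in each such game.

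The heart of the argument --- and the step I expect to be the main obstacle --- is a strategy-amalgamation lemma: using the relations $\gamma_i^2 = e$, two winning strategies of the right types, ``played against each other'' along the involution $\gamma_i$, assemble into an honest vertex $v$ of $G$ at which $c(v) = i \iff c(\gamma_i \cdot v) = i$ fails, contradicting that $M$ is a matching. Since determinacy forces, for each of the finitely many pieces of data, one strategy-type from a short menu to exist, a pigeonhole over the $d \ge 2$ generators produces an incompatible pair and hence the contradiction. Everything besides calibrating the payoff sets and the move-structure so that the amalgamation goes through --- freeness, $d$-regularity, acyclicity, the Borelness of $c$, and the pigeonhole --- is routine; for the precise games and the amalgamation I would follow \cite{Marks16} line by line.
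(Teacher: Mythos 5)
First, a framing point: the paper does not prove this statement at all; it is imported verbatim from Marks \cite{Marks16} as a black box (the paper's own contribution is the complementary fact that the conclusion fails in \HYP), so there is no in-paper argument to compare against. That said, your sketch of Marks' proof has a fatal flaw in the construction of the graph, before any determinacy enters. The Schreier graph of a free Borel action of $(\mathbb Z/2)^{\ast d}$ --- and equally its bipartite double cover $X = Y \times \mathbb Z/2$ --- admits an obvious Borel (indeed clopen) perfect matching: take $M_1 = \{\{v,\gamma_1 v\} : v \in X\}$. Since $\gamma_1$ acts as a fixed-point-free involution, every vertex lies on exactly one edge of $M_1$. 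Equivalently, the constant function $c \equiv 1$ satisfies your matching identity $c(v)=i \iff c(\gamma_i v)=i$ at every vertex, so no amalgamation of strategies can produce a vertex where that identity fails; for this graph the identity is consistent with (indeed witnessed by) a Borel matching, not contradictory. The pigeonhole you gesture at has nothing to bite on, and no calibration of payoff sets can rescue an impossibility claim about a graph that does have a Borel perfect matching.

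Marks' actual construction is different. One takes $\Gamma = \ast_{i<d}\,\mathbb Z/d\mathbb Z$ acting freely on a standard Borel space $X$, and lets $G$ be the bipartite incidence graph between the points of $X$ and the orbits of the cyclic factors $\langle\gamma_i\rangle$: each point lies in exactly $d$ such orbits (one per factor) and each orbit contains exactly $d$ points by freeness, so $G$ is $d$-regular; the bipartition (points versus orbits) is Borel for free; and acyclicity follows from freeness much as in your argument. A Borel perfect matching of $G$ assigns to each point $x$ an index $i(x)$ (the factor whose orbit $x$ is matched to) in such a way that each $\langle\gamma_i\rangle$-orbit receives exactly one point with $i(x)=i$. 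Applying Marks' free-product determinacy lemma to the Borel cover $A_i = \{x : i(x)=i\}$ yields some $i$ and an entire $\langle\gamma_i\rangle$-orbit contained in $A_i$, i.e., $d\ge 2$ distinct points all matched to the same orbit vertex --- the desired contradiction. So the determinacy machinery you describe is the right tool, but it must be applied to a graph of this incidence type, not to the Schreier graph of $(\mathbb Z/2)^{\ast d}$ or its double cover.
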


\begin{prop}
In $\HYP$, every Borel $d$-regular graph with no odd cycles has a Borel perfect matching.
\end{prop}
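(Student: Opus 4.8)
The plan is to follow the template of Propositions~\ref{prop:d-regular-bipartite} and~\ref{prop:edge2}: process $G$ one connected component at a time, using Lemma~\ref{lem:d-regular-component} to locate each component hyperarithmetically, and then produce a perfect matching on that component by a compactness argument whose finite approximations are supplied by Lemma~\ref{lem:induced-matching}. Note that every connected component $C$ of $G$ is itself $d$-regular, being an induced subgraph of the $d$-regular graph $G$, and $C$ may be finite or infinite.

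Concretely, for each vertex $X$ let $\beta$ be the least computable ordinal such that $\emptyset^\beta$ computes an enumeration of the connected component $C$ of $X$ together with all evaluation maps needed to verify its edges; such a $\beta$ exists by Lemma~\ref{lem:d-regular-component} and depends only on $C$. At stage $\beta$ I pick the least index $e$ giving such an enumeration $Y=\phi_e^{\emptyset^\beta}$, listing the vertices of $C$ as $v_0,v_1,\dots$. I then build, computably from a couple of jumps of $\emptyset^\beta$, a finitely branching tree $T$ whose node at level $n$ is a set $E_0\subseteq E(C_n)$, where $C_n$ is the subgraph induced on $\{v_0,\dots,v_{n-1}\}$, such that each vertex meets at most one edge of $E_0$ and every $v_i$ with $i<n$ whose entire $G$-neighborhood already lies in $\{v_0,\dots,v_{n-1}\}$ meets exactly one edge of $E_0$; the child relation is restriction of edge sets. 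The tree $T$ is infinite: given $m$, choose $n$ large enough that the $d$ neighbors of each of $v_0,\dots,v_{m-1}$ occur among $v_0,\dots,v_{n-1}$; the finite graph $C_n$ has no odd cycles, hence is bipartite, so Lemma~\ref{lem:induced-matching} produces $E_0\subseteq E(C_n)$ matching every degree-$d$ vertex of $C_n$, in particular $v_0,\dots,v_{m-1}$; restricting $E_0$ to $E(C_m)$ still matches each such $v_i$, since its partner is a neighbor of $v_i$ and hence already among $v_0,\dots,v_{m-1}$, so $E_0\restriction E(C_m)$ is a node of $T$ at level $m$. By K\"onig's Lemma $T$ has an infinite path; I take the leftmost one, computable from one further jump. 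Its union $P_C\subseteq E(C)$ is a matching (each vertex lies in at most one edge, inheriting this from each level), and every vertex of $C$ is in fact matched, because in the $d$-regular graph $C$ the whole neighborhood of any vertex appears at some finite level, forcing that vertex to be matched from that level onward. Hence $P_C$ is a perfect matching of $C$, and this handles finite components as the special case where $C_n$ stabilizes.

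Running this uniformly over all components gives $P=\bigcup_C P_C$, and $P$ is $\omega_1^{ck}$-computable, hence Borel in \HYP{} by Theorem~\ref{thm:main2}: to decide whether an adjacent pair $(X,Y)$ lies in $P$, recover the stage $\beta$ and index $e$ governing the common component of $X$ and $Y$, rebuild $T$, pass to its leftmost path, and read off whether $\{X,Y\}$ is selected, a decision settled at the finite level where both vertices and enough of their neighborhoods have been enumerated. The step I expect to be the main obstacle is the verification that $T$ is infinite: one must check that the ``every full-degree vertex is matched'' requirement survives restriction from a large level $n$ down to level $m$, which is exactly why Lemma~\ref{lem:induced-matching} is phrased to match \emph{all} degree-$d$ vertices rather than merely giving a maximal matching, and why one passes to an $n$ that swallows the neighborhoods of $v_0,\dots,v_{m-1}$ before applying it.
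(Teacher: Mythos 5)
Your proposal is correct and follows essentially the same route as the paper: decompose into connected components via Lemma~\ref{lem:d-regular-component}, take the least $\emptyset^\beta$-computable enumeration of each component, and extract a perfect matching from a nonempty $\Pi^0_1$ class (your explicit finitely-branching tree) whose nonemptiness is witnessed by the partial matchings of Lemma~\ref{lem:induced-matching}. Your version merely spells out in more detail the tree construction and the verification that restrictions of the Lemma~\ref{lem:induced-matching} matchings remain valid nodes, which the paper leaves implicit.
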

\begin{proof}
 Given a Borel $d$-regular graph $E$ with no odd cycles,
at stage $\beta$ we consider those connected components
of $E$ for which $\beta$ is the least ordinal that computes an enumeration 
of the connected component, together with the sequence of evaluation maps needed 
to verify the component.

For each component, we fix the least enumeration $Y$ of that component.  
Using that enumeration to order the vertices, 
the set of perfect matchings for the component
can be given as a $\Pi^0_1(Y)$ class.  Now Lemma \ref{lem:induced-matching}
provides arbitrarily large partial perfect matchings, so compactness ensures 
that the $\Pi^0_1(Y)$ class is non-empty.
Now $\emptyset^{\beta+1}$ can compute its 
leftmost perfect matching, which we apply to the connected component being considered.

By Lemma \ref{lem:d-regular-component}, every component of $E$ will eventually be 
found and a perfect matching computed on it.
\end{proof}

Since the theories of hyperarithmetic analysis are among the weakest axioms 
strong enough to make sense of Borel sets, the fact that Borel sets in \HYP{} 
do not act like the real-world ones is not too surprising.
But it does establish the theories of hyperarithmetic analysis as
reasonable base theories, when asking if theorems
proved by Borel Determinacy in \cite{Marks16}
could be proved by measure or category methods.

In particular, we would be curious to know if Marks' theorem that 
there is a $d$-regular acyclic Borel graph with no Borel $d$-coloring 
follows from $\CDPB$ or $\mathsf{CD}\text-\mathsf M$.  
Here $\mathsf{CD\text-M}$ is
the principle ``every completely determined Borel set is measurable'' 
(see \cite{Westrick20}).  
One might suspect these theories are too weak, 
based on the following result of
Conley, Marks \& Tucker-Drob: for $d\geq 3$, 
every $d$-regular acyclic Borel graph has a measurable $d$-coloring and a 
$d$-coloring with the property of Baire, regardless of which Borel
measure or which Polish Borel-compatible 
topology is used on the vertex set \cite[Theorem 1.2]{ConleyMarksTuckerDrob2016}.
This shows that if the theorem can be proved by measure 
or category, the proof cannot proceed in ``the usual way'' of showing 
that there is no measurable or Baire measurable coloring.  However,
there remains the possibility 
that measure or category is used in some creative way in an 
alternate proof,
for example by being applied to some object other than the 
purported $d$-coloring.  On the other hand, it is not known whether 
this theorem can even be proved in second order arithmetic.

\subsection{Borel Dual Ramsey Theorem}

We recall the statement of the Borel Dual Ramsey Theorem.  First, we need some notation.
\begin{definition}
  For $k\in\mathbb{N}\cup\{\omega\}$, $(\omega)^k$ is the set of partitions of $\omega$ into exactly $k$ nonempty pieces.  When $p\in(\omega)^\omega$, we write $(p)^k$ for the set of coarsenings of $p$ into exactly $k$ blocks.
\end{definition}

The Borel Dual Ramsey Theorem says:
\begin{quote}
  For all finite $k,\ell\geq 1$, if $(\omega)^k=C_0\cup\cdots\cup C_{\ell-1}$ where each $C_i$ is Borel then there exists $p\in(\omega)^\omega$ and an $i<l$ such that $(p)^k\subseteq C_i$.
\end{quote}

\begin{theorem}
  In \HYP, the Borel Dual Ramsey Theorem fails.
\end{theorem}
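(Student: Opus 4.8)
The strategy is to invoke Theorem~\ref{thm:main2} (via the $\omega_1^{ck}$-recursive computability criterion, exactly as in Section~\ref{sec:applications}) to build, in \HYP, a Borel coloring $(\omega)^k = C_0 \cup \cdots \cup C_{\ell-1}$ with no Borel monochromatic coarsening-closed set. The counterexample should be ``choice-flavored'' in the same spirit as Propositions~\ref{prop:wo} and~\ref{prop:edge1}: the color-builder works in stages indexed by $\beta \in \mathcal O$, and at each stage uses fresh reals of Turing degree $\emptyset^\beta$ to diagonalize against the $\beta$-th candidate homogeneous set. Since (by the first nontrivial instance) it suffices to refute the theorem for $k=\ell=2$ — a homogeneous set for $k=2$, $\ell=2$ is in particular forbidden, and the higher cases can be reduced to this or handled identically — I would focus on that case and then remark that the same construction scales.

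\textbf{Key steps.} First, set up the stage structure: fix $\alpha^\ast \in \mathcal O^\ast \setminus \mathcal O$, and for each $\alpha <_\ast \alpha^\ast$ and $e \in \omega$ reserve a supply of distinct computable partitions $p \in (\omega)^\omega$ to be used as ``probe points'' against the potential homogeneous set coded by $\phi_e^{\emptyset^\alpha}$. Second, describe the $\omega_1^{ck}$-recursive coloring: at stage $\beta$ we decide the color of all partitions $q \in (\omega)^k$ for which $\beta$ is least with $q \leq_T \emptyset^\beta$; we handle all pairs $(\alpha, e)$ with $\alpha \leq \beta$ such that $\phi_e^{\emptyset^\alpha}$ appears to code a Borel partition $\langle C_0, C_1\rangle$ of $(\omega)^k$ together with a purported $p_{\alpha,e} \in (\omega)^\omega$ and an $i < 2$ with $(p_{\alpha,e})^k \subseteq C_i$, and $\beta$ is least such that $\emptyset^\beta$ computes evaluation maps verifying the relevant clopen/Borel facts; for each such pair we pick a fresh $k$-coarsening $q$ of $p_{\alpha,e}$ of degree $\emptyset^\beta$ that has not yet been colored and assign it a color $\ne i$, thereby destroying homogeneity of the candidate. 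Third, one must check that a partition $q \in (\omega)^k$ colored at stage $\beta$ is never a coarsening of a $p_{\alpha,e}$ for a pair handled at a later stage — this is where one needs the probe coarsenings chosen for distinct $(\alpha,e)$-pairs to be ``independent'' (e.g. forced to disagree below some threshold), analogous to the ``fresh reals'' device in Proposition~\ref{prop:wo}. Fourth, observe that the resulting color classes are Borel in \HYP{} by Theorem~\ref{thm:main2}, since the above is an $\omega_1^{ck}$-recursive procedure, and conclude that every candidate homogeneous-set code fails, so the Borel Dual Ramsey Theorem fails in \HYP.

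\textbf{Main obstacle.} The delicate point — as in Proposition~\ref{prop:wo} — is the bookkeeping that guarantees the diagonalization is \emph{consistent}: when we recolor a coarsening $q$ of $p_{\alpha,e}$ at stage $\beta$ to spoil homogeneity, we must ensure that $q$ is genuinely uncolored at that point and that no earlier or later requirement wants to color $q$ the other way. Unlike the graph cases, here the objects being colored ($k$-partitions) carry a coarsening relation, so ``fresh'' must mean fresh \emph{relative to all the probe partitions already in play}; one wants, for each $(\alpha, e)$, a dedicated computable partition $p_{\alpha,e}$ whose $k$-coarsenings of degree exactly $\emptyset^\beta$ are disjoint (across pairs) from those of every other probe, which can be arranged by coding $(\alpha, e)$ into the first few blocks of $p_{\alpha,e}$. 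A secondary point worth a sentence is the reduction of general $k, \ell$ to the $k = \ell = 2$ case, or alternatively just noting that the construction above works verbatim for any finite $k, \ell \geq 2$ by using $\ell$ colors and assigning the spoiling coarsening any color different from the claimed $i$.
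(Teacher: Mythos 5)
Your high-level plan (an $\omega_1^{ck}$-recursive, stage-by-stage coloring made Borel in \HYP{} via Theorem \ref{thm:main2}, with the reduction to $k=\ell=2$) matches the paper, but the diagonalization in your key steps points the wrong way, and this is a genuine gap rather than a presentational issue. In Propositions \ref{prop:wo} and \ref{prop:edge1} the constructed object is a graph and the opponents are Borel codes $\phi_e^{\emptyset^\alpha}$ for colorings, so one waits for evaluation maps and then spoils each code. Here the constructed object \emph{is} the coloring $C_0\cup C_1$, and the opponents are the potential homogeneity witnesses, which are simply the hyperarithmetic partitions $p\in(\omega)^\omega$: to refute the theorem you must arrange that \emph{every} $p\in(\omega)^\omega\cap\HYP$ has $2$-coarsenings of both colors. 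There is no ``appears to code a Borel partition'' condition to test and no evaluation maps to wait for; and your device of reserving, for each pair $(\alpha,e)$, dedicated \emph{computable} probe partitions $p_{\alpha,e}$ (with $(\alpha,e)$ coded into their first blocks) cannot work, because the witnesses $p$ are supplied by the adversary and range over all of \HYP, not over a countable family you construct. The paper instead enumerates, at stage $\beta$, all $p$ with $\beta$ least such that $p\leq_T\emptyset^\beta$, and for each such $p$ designates two of its coarsenings to receive different colors.

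The second missing ingredient is the mechanism that makes those designated coarsenings pairwise distinct across all $p$ and all stages, and recognizable by the coloring algorithm. You correctly identify consistency as the main obstacle, but your proposed fix does not address it. The paper's solution is to fix, for each $\beta$, a monotone $f_\beta\equiv_T\emptyset^{\beta+1}$ dominating every $\emptyset^\beta$-computable function, and to take the designated coarsenings of $p$ to be finite modifications of $f_\beta(p)$ (the coarsening whose top block is $\bigcup_i p_{f_\beta(i)}$). The two key properties are that $q\leq_T f_\beta\oplus p$ and $f_\beta\leq_T q\oplus p$; together these force each designated $q$ to satisfy $q\not\leq_T\emptyset^\beta$ while $q\leq_T\emptyset^{\beta+1}$, which is exactly what guarantees that designated coarsenings from different stages never collide, and that on input $q$ the $\omega_1^{ck}$-recursive procedure can determine (from $\emptyset^{\beta+O(1)}$) whether $q$ is a designated coarsening and of which kind. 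Your proposal asks for a ``fresh coarsening of degree $\emptyset^\beta$'' without saying how such a coarsening is found uniformly in $p$ or how the algorithm, given an arbitrary $q\in(\omega)^2$, recognizes whether $q$ was so designated; without something like the $f_\beta$ trick the construction does not go through. Finally, one designated coarsening per $p$ does not by itself suffice: you must also argue that $(p)^2$ meets the other color class. The paper handles this by designating \emph{two} distinct coarsenings $q^{i,0}_\beta,q^{i,1}_\beta$ per $p$ and giving color $0$ only to the former, so that the latter provably receives the default color $1$.
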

\begin{proof}
  We show this even with $k=\ell=2$.

  Given $p\in(\omega)^\omega$ with $p=\bigcup_i p_i$ and a monotone function $f$, let us define $f(p)\in(\omega)^2$ so that $f(p)=q_0\cup q_1$ where $q_1=\bigcup_ip_{f(i)}$ and $q_0=\omega\setminus q_1$.  By a \emph{finite modification} of $f(p)$, we mean $f(p)=q_0\cup q_1$ where $q_1=\bigcup_{i\geq n}p_{f(i)}$ and $q_0=\omega\setminus q_1$.  The important properties are that the finite modifications are pairwise distinct and whenever $q$ is a finite modification of $f(p)$, $q\leq_T f\oplus p$ and $f\leq_T q\oplus p$.

    For each $\beta$, let $f_\beta$ be a monotone function Turing equivalent to $\emptyset^{\beta+1}$ and which is eventually larger than every function computable from $\emptyset^\beta$.

  Let $p^0_\beta,\ldots,p^n_\beta,\ldots$ enumerate those elements of $(\omega)^\omega$ such that $\beta$ is least with $p^i_\beta\leq\emptyset^\beta$.  We recursively choose, for each $p^i_\beta$, two elements $q^{i,0}_\beta,q^{i,1}_\beta\in(\omega)^2$ by letting $q^{i,0}_\beta$ be the first finite modification of $f_\beta(p^i_\beta)$ distinct from all $q^{j,b}_\beta$ with $j<i$ and $q^{i,1}_\beta$ the first finite modification of $f_\beta(p^i_\beta)$ distinct from all $q^{j,b}_\beta$ and also $q^{i,0}_\beta$.

  Observe that if $q^{i,b}_\beta=q^{i',b'}_{\beta'}$ then $\beta=\beta'$, and therefore $i=i'$ and $b=b'$: if $\beta'<\beta$ then $q^{i',b'}_{\beta'}\leq_T f_{\beta'}\oplus p^{i'}_{\beta'}\leq_T\emptyset^{\beta'+1}$, while $\emptyset^{\beta+1}\leq_T f_\beta\leq_T p^i_\beta\oplus q^{i,b}_\beta$ and, since $p^i_\beta\leq_T\emptyset^\beta$, we must have $q^{i,b}_\beta\not\leq_T\emptyset^\beta$.

  By construction, for each $\beta$, the $q^{n,b}_\beta$ can be uniformly enumerated by $\emptyset^{\beta+k}$ for some $k$ large enough to carry out these computations.  So at stage $\beta+k$, we color all the $q^{n,0}_\beta$ with color $0$ and all other elements of $(\omega)^2$ which are computable from $\emptyset^{\beta+1}$ which have not already been colored with color $1$.

  For any $p\in(\omega)^\omega\cap\HYP$, we have $p=p^n_\beta$ for some $n,\beta$, and we have $q^{n,0}_\beta\in C_0$ and $q^{n,1}_\beta\in C_1$, so $(p)^2\not\subseteq C_0$ and $(p)^2\not\subseteq C_1$.  Therefore the Borel Dual Ramsey Theorem fails in \HYP.
\end{proof}

\bibliographystyle{alpha}
\bibliography{references}

\end{document}